\definecolor{c0000ff}{RGB}{0,0,255}
\definecolor{c00ff00}{RGB}{0,255,0}
\definecolor{cff0000}{RGB}{255,0,0}
\theoremstyle{definition}
\newtheorem{thm}{Theorem}[section]
\newtheorem{prop}[thm]{Proposition}
\newtheorem{lem}[thm]{Lemma}
\newtheorem{remark}[thm]{Remark}
\newtheorem{defn}[thm]{Definition}
\newtheorem{cor}[thm]{Corollary}
\newtheorem{question}[thm]{Question}
\newcommand{\vsp}{\vspace{0.2cm}}
\date{}
\author{}
\begin{document}

\title{Uniformly perfect finitely generated simple left orderable groups}

\author{James Hyde, Yash Lodha, Andr\'{e}s Navas and Crist\'{o}bal Rivas}
\maketitle

\begin{abstract}
We show that the finitely generated simple left orderable groups $G_{\rho}$ constructed by the first two authors in \cite{HydeLodha} are uniformly perfect -- 
each element in the group can be expressed as a product 
of three commutators of elements in the group.
This implies that the group does not admit any homogeneous quasimorphism. Moreover, any nontrivial action of the group on the circle, that lifts to an action on the real line, admits a global fixed point.
Most strikingly, it follows that the groups are examples of \emph{left orderable monsters}, which means that any faithful action on the real line without a global 
fixed point is globally contracting. 
%of type $\textup{(iii)}$.
This answers Question $4$ from the 2018 ICM proceedings article of the third author.
(This question has also been answered simultaneously and independently, using completely different methods, by Matte Bon and Triestino in \cite{MatteBonTriestino}.)
To prove our results, we provide a certain \emph{characterisation of elements} of the group $G_{\rho}$ which is a useful new tool in the study of these examples.
\end{abstract}

\vsp\vsp

%\noindent{\bf MSC 2010 classification}: 20F16, 22F05, 37C85, 37F15.

In $1980$ Rhemtulla asked whether there exist finitely generated simple left orderable groups (see \cite{HydeLodha} for a discussion around the history of the problem and references).
This question was answered in the affirmative by the first two authors in \cite{HydeLodha}.
The construction takes as an input a certain \emph{quasi-periodic} labelling $\rho$ of the set $\frac{1}{2}\mathbf{Z}$ which is a map $$\rho:\frac{1}{2}\mathbf{Z}\to \{a,b,a^{-1},b^{-1}\}$$ that satisfies a certain set of axioms.
(See the preliminaries section for details.)
Such labellings exist and are easy to construct explicitly.
For each such labelling $\rho$, one constructs an explicit group action $G_{\rho}<\textup{Homeo}^+(\mathbf{R})$ which is a finitely generated simple left orderable group. 

Given a group $G$ and an element $f\in [G,G]$, the integer 
$cl(f)$ is defined as the smallest $k$ such that $f$ can be expressed as a product of $k$ commutators of elements in $G$.
Our main theorem is the following:

\begin{thm}\label{main}
Let $\rho$ be a quasi-periodic labelling.
Then for each element $f\in G_{\rho}$, it holds that $cl(f)\leq 3$.
\end{thm}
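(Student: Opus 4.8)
The plan is to prove the uniform bound by adapting the classical ``infinite repetition'' (swindle) commutator trick to this finitely generated setting. Note first that simplicity already gives perfectness, so $cl(f)<\infty$ for every $f$; the entire content of the theorem is the \emph{uniformity} of the constant. The single delicate point in a swindle argument is that the infinite products of conjugates one forms must remain inside $G_{\rho}$, and this is precisely where I would deploy the \emph{characterisation of elements} promised in the abstract. I would use that characterisation in two complementary ways: to certify that the swindled products are genuine elements of $G_{\rho}$, and to cut a given element along a fixed point into pieces with controlled support.

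The key lemma I would isolate is a \emph{one-sided displacement} statement. Suppose $f\in G_{\rho}$ has support $W=\supp(f)$, and suppose the quasi-periodic structure of $\rho$ furnishes a ``shift'' $g\in G_{\rho}$ for which the translates $g^{n}(W)$, $n\ge 0$, are pairwise disjoint and march off toward $+\infty$. Then the factors $g^{n}fg^{-n}$ have pairwise disjoint supports, so they commute and the product $\Phi=\prod_{n\ge 0}g^{n}fg^{-n}$ is a well-defined homeomorphism (near any point only finitely many factors are nontrivial). A direct telescoping, using $g\Phi g^{-1}=\prod_{n\ge 1}g^{n}fg^{-n}=f^{-1}\Phi$, gives $f=\Phi g\Phi^{-1}g^{-1}=[\Phi,g]$, so $f$ is a single commutator \emph{provided} $\Phi\in G_{\rho}$. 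The membership of $\Phi$ is exactly the point at which quasi-periodicity is essential: on each translate $g^{n}(W)$ the map $\Phi$ agrees with $g^{n}fg^{-n}$, and the characterisation of elements should show that this infinite, self-similar rearrangement is compatible with the axioms on $\rho$ and hence defines an element of $G_{\rho}$. The mirror statement, displacing support toward $-\infty$, is proved identically.

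With the displacement lemma in hand, the theorem reduces to fragmenting an arbitrary $f\in G_{\rho}$ into boundedly many displaceable pieces. If $f$ fixes some admissible cut point $c$, I would write $f=f_{-}f_{+}$, where $f_{-}$ equals $f$ on $(-\infty,c]$ and is the identity on $[c,\infty)$, and $f_{+}=f_{-}^{-1}f$; each factor has support bounded on one side, so each is a single commutator by the lemma. This already handles elements with a fixed point in two commutators. The remaining and genuinely harder case is that of fixed-point-free elements, whose support is all of $\R$ and which therefore cannot be displaced directly; here I would first multiply $f$ by one carefully chosen commutator so as to manufacture an admissible fixed point, and then apply the two-piece splitting to the result. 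It is this preliminary adjustment that consumes the third commutator and pins the bound at $cl(f)\le 3$ rather than $2$.

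The main obstacle, carrying essentially all of the real content, is twofold. The first and deeper difficulty is verifying that the swindled product $\Phi$ truly lies in $G_{\rho}$: this is not automatic for a finitely generated group and is exactly what forces one to develop the structural \emph{characterisation of elements} rather than argue abstractly with supports alone. The second difficulty is the treatment of fixed-point-free elements and, relatedly, the matching of germs at $\pm\infty$, so that the cutting operations produce legitimate elements of $G_{\rho}$ and the total count does not exceed three. Controlling this bookkeeping, and ensuring a single shift $g$ drawn from the quasi-periodic data simultaneously displaces the relevant pieces in the required directions, is where I expect the bulk of the technical work to lie.
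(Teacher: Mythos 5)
Your proposal has a genuine and fatal gap at its central step: the displacement lemma cannot hold in $G_{\rho}$, and the characterisation of elements rules it out rather than certifying it. The characterisation (Definition \ref{Krho}, condition 3.a) says that the displacement $x-x\cdot f$ of any $f\in G_{\rho}=K_{\rho}$ is determined by the labelling pattern $\mathcal{W}(x,k_f)$ in a window around $x$; combined with the first quasi-periodicity axiom (every finite subword of $\rho$ recurs in every sufficiently long block), this forces the support of every nontrivial element of $G_{\rho}$ to recur syndetically in \emph{both} directions. Consequently no nontrivial element has support bounded on one side, your cut pieces $f_{-},f_{+}$ are never in $G_{\rho}$, and the swindled product $\Phi=\prod_{n\ge 0}g^{n}fg^{-n}$ (identity far to the left, nontrivial somewhere) violates 3.a outright. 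Moreover no shift $g$ as you require exists: by Lemma \ref{ElementsProperties}(2) every element of $G_{\rho}$ fixes a point in every sufficiently long interval, so no element can march a set off to $+\infty$. The swindle is structurally impossible inside this group.

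The part of your plan that survives is the observation that one commutator should be spent manufacturing a fixed point: this is exactly Lemma \ref{stabilisation}, where $f$ is multiplied by one commutator and conjugated so as to fix a neighbourhood of $0$, after which quasi-periodicity forces it to fix a neighbourhood of an integer in every long interval (``uniform stability''). But the correct fragmentation is not into one-sided pieces; it is into finitely many \emph{equivalence classes of atoms}, where each class consists of all the integer intervals carrying a given local word of $\rho$, and the piece supported on one class is an infinite, quasi-periodically repeated copy of a single $F'$-element (a ``special element'' $\lambda_{\omega}(h)$). This is the only kind of infinitely supported product compatible with condition 3.a. The stabilised element then lies in a subgroup isomorphic to a finite direct sum of copies of $F'$ (Proposition \ref{mainprop2}), and the bound of two commutators for $F'$ (Theorem \ref{clF}) supplies the remaining factor of the count $1+2=3$. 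You would need to replace your displacement lemma by this atom decomposition and the localisation result for special elements (Proposition \ref{SpecialElements}) for the argument to go through.
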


Recall that a homogeneous quasimorphism is a quasimorphism $\phi:G\to \mathbf{R}$ with the property that the restriction of $\phi$ to any cyclic subgroup is a homomorphism.
As a consequence of Theorem \ref{main} we obtain that the stable commutator length vanishes, and hence the group does not admit any 
nontrivial homogeneous quasimorphism. Using the work of Ghys \cite{ghys}, this allows us to show the following.

\begin{cor}\label{maincorollary}
Let $\rho$ be a quasi-periodic labelling. Then every faithful action of $G_{\rho}$ on $\mathbf{S}^1$, that lifts to an action on the real line, admits a global fixed point on $\mathbf{S}^1$.
\end{cor}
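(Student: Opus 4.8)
The plan is to deduce the corollary from Theorem \ref{main} in three stages: pass from uniform perfectness to the vanishing of all homogeneous quasimorphisms, use this to kill the rotation number of the lifted action, and finally feed the resulting vanishing of the bounded Euler class into Ghys's dictionary to produce a global fixed point.

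First I would record the purely algebraic consequence of Theorem \ref{main}. Since every $f\in G_\rho$ satisfies $cl(f)\le 3$, in particular $cl(f^n)\le 3$ for all $n$, so the stable commutator length $scl(f)=\lim_n cl(f^n)/n$ vanishes for every $f$; note also that $G_\rho$ is perfect, so $\mathrm{Hom}(G_\rho,\mathbf{R})=0$. By Bavard duality, $scl\equiv 0$ forces every homogeneous quasimorphism $\phi:G_\rho\to\mathbf{R}$ to be trivial. This is the one input I would isolate as a standalone observation.

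Next, given a faithful action $\rho:G_\rho\to\textup{Homeo}^+(\mathbf{S}^1)$ that lifts to the real line, I fix a lift $\tilde\rho:G_\rho\to\textup{Homeo}^{+}_{\mathbf{Z}}(\mathbf{R})$ into the group of homeomorphisms commuting with the integer translation. The translation number $\mathrm{rot}:\textup{Homeo}^{+}_{\mathbf{Z}}(\mathbf{R})\to\mathbf{R}$ is a homogeneous quasimorphism, so $\tau:=\mathrm{rot}\circ\tilde\rho$ is a homogeneous quasimorphism on $G_\rho$ and therefore vanishes identically by the previous step. Thus every $\tilde\rho(g)$ has zero translation number and hence a fixed point in $\mathbf{R}$.

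Finally I would invoke the work of Ghys \cite{ghys}. The existence of the lift means the integral Euler class $e(\rho)\in H^2(G_\rho;\mathbf{Z})$ vanishes; combined with $\mathrm{Hom}(G_\rho,\mathbf{R})=0$ and the absence of nontrivial homogeneous quasimorphisms, the exact sequence $0\to H^1(G_\rho;\mathbf{R})\to Q(G_\rho)\to H^2_b(G_\rho;\mathbf{R})\to H^2(G_\rho;\mathbf{R})$ shows that the comparison map is injective, so the real bounded Euler class of $\rho$ also vanishes. By Ghys's theorem this makes $\rho$ semiconjugate to an action by rotations whose rotation numbers are precisely the values of $\tau$, i.e.\ all zero; hence $\rho$ is semiconjugate to the trivial action, and a point-fibre of the semiconjugacy is a global fixed point. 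I expect the main obstacle to be the careful bookkeeping at this last step: converting the semiconjugacy (a monotone degree-one map that may collapse arcs) into an honest fixed point on $\mathbf{S}^1$, and stating precisely which form of Ghys's Euler-class dictionary is being used, rather than any genuinely new difficulty.
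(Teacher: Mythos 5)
Your proposal is correct and follows essentially the same route as the paper: Theorem \ref{main} gives vanishing stable commutator length, hence no nontrivial homogeneous quasimorphisms, and Ghys's theorem then forces the rotation-number quasimorphism of the lifted action to be trivial, yielding a global fixed point. The paper compresses this into two sentences by citing Ghys directly for the statement that the rotation number is nontrivial in the absence of a global fixed point, whereas you spell out the intermediate bounded-Euler-class bookkeeping (perfectness, injectivity of the comparison map, semiconjugacy to the trivial action); the content is the same.
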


Recall that for every action of a finitely generated group $G$ by orientation preserving homeomorphisms of the real line without global fixed points, there are one of three possibilities:
\begin{enumerate}
\item[(i)] There is a $\sigma$-finite measure $\mu$ that is invariant under the action.
\item[(ii)] The action is semiconjugate to a minimal action for which every small enough interval is sent into a sequence of intervals that converge to a point under well chosen group elements, but this
property does not hold for every bounded interval. (Here, by a semiconjugacy we roughly mean a factor action for which the factor map is a continuous, non decreasing, proper map of the real line.)
\item[(iii)] The action is globally contracting; more precisely, it is 
semiconjugate to a minimal one for which the contraction property above holds for all bounded intervals.
\end{enumerate}

We obtain the following as an immediate consequence of Corollary \ref{maincorollary}.

\begin{cor}\label{maincor1}
Let $\rho$ be a quasi-periodic labelling.
Then any faithful action of the group $G_{\rho}$ on $\mathbf{R}$ without global fixed points is of type $\textup{(iii)}$.
\end{cor}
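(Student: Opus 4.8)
The plan is to invoke the trichotomy recalled just above: a faithful action of $G_\rho$ on $\mathbf{R}$ without a global fixed point must be of one of the types (i), (ii), (iii), so it suffices to rule out (i) and (ii). The two structural facts I would use are that $G_\rho$ is \emph{perfect} — it is simple and nonabelian, and Theorem \ref{main} shows it is even uniformly perfect, so $\mathrm{Hom}(G_\rho,\mathbf{R})=0$ — and that $G_\rho$ is \emph{simple}, so that every action of $G_\rho$ is either trivial or faithful.

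To exclude type (i), suppose the action preserved a $\sigma$-finite invariant (Radon) measure $\mu$. A finite measure would fix its median point, so $\mu$ has infinite total mass; after collapsing the connected components of the complement of $\supp\mu$ one may assume $\mu$ has full support, and then the map $h(x)=\mu[x_0,x]$ (with the usual sign convention) conjugates the action to one by translations, $h(gx)=h(x)+\tau_\mu(g)$, where $\tau_\mu\colon G_\rho\to\mathbf{R}$ is a homomorphism. Since the original action has no global fixed point, neither does the collapsed one, so it is nontrivial and $\tau_\mu\neq 0$. This contradicts $\mathrm{Hom}(G_\rho,\mathbf{R})=0$, so type (i) cannot occur.

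To exclude type (ii) I would bring in Corollary \ref{maincorollary}. After passing to the semiconjugate minimal model $\bar\phi$ on $\mathbf{R}$, type (ii) means precisely that small intervals are contracted but some bounded interval is not, which is exactly the signature of $\bar\phi$ being the lift of a minimal action on the circle. Concretely, one extracts from the failure of global contraction a fixed-point-free homeomorphism $T$ commuting with every $\bar\phi(g)$, so that the action descends to $\psi\colon G_\rho\to\textup{Homeo}^+(\mathbf{S}^1)$ on $\mathbf{S}^1=\mathbf{R}/\langle T\rangle$, with $\bar\phi$ a lift of $\psi$. Minimality of $\bar\phi$ descends to $\psi$: a nonempty closed $\psi(G_\rho)$-invariant subset of $\mathbf{S}^1$ lifts to a nonempty closed $\bar\phi(G_\rho)$-invariant subset of $\mathbf{R}$, hence to all of $\mathbf{R}$. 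In particular $\psi$ is nontrivial, so faithful by simplicity, and it lifts to $\mathbf{R}$ by construction; Corollary \ref{maincorollary} then yields a global fixed point of $\psi$ on $\mathbf{S}^1$, contradicting the minimality of $\psi$. Hence type (ii) is also impossible, and only type (iii) remains.

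The main obstacle is the structural input in type (ii): turning the qualitative statement ``small intervals contract but not all bounded ones do'' into an honest commuting, fixed-point-free homeomorphism $T$, and verifying that the quotient action on $\mathbf{S}^1$ is again minimal. By comparison I expect the measure-theoretic points in type (i) — handling infinite total mass and possible atoms so that the conjugacy $h$ and the homomorphism $\tau_\mu$ are genuinely well defined and nontrivial — to be routine. Once these are settled, perfectness disposes of (i) and Corollary \ref{maincorollary} disposes of (ii), leaving type (iii) as the only possibility.
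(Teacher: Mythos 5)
Your proposal is correct and follows essentially the same route as the paper: type (i) is excluded because the translation-number homomorphism associated to the invariant measure would be a nontrivial homomorphism $G_\rho\to\mathbf{R}$, impossible for a simple (perfect) group, and type (ii) is excluded by constructing a fixed-point-free commuting homeomorphism, descending to a faithful, fixed-point-free circle action that lifts to the line, contradicting Corollary \ref{maincorollary}. Your write-up merely fills in more of the standard details (the measure-to-translation conjugacy, minimality of the quotient) than the paper's brief sketch.
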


This answers the following question of the third author.

\begin{question}
(Navas, ICM proceedings $2018$ Question $4$) Does there exist an infinite, finitely-generated group that acts on the real line all of whose actions by orientation-preserving homeomorphisms of the line without global fixed points are of type $\textup{(iii)}$?
\end{question}

\begin{remark}
The above question has been answered simultaneously and independently by Matte Bon and Triestino in \cite{MatteBonTriestino}.
They provide a new family of finitely generated simple left orderable groups, which are overgroups of the groups $G_{\rho}$, and 
prove the analog of Corollary \ref{maincorollary} for that family. Their methods are completely different from ours.
\end{remark}

Corollaries \ref{maincorollary} and \ref{maincor1} should be compared with similar theorems for lattices in higher rank simple 
Lie groups. For them, it is known that every action on the circle has a finite orbit; therefore, up to a finite index group, they admit a global 
fixed point \cite{BM,ghys2}. However, it is still unknown whether they admit nontrivial actions on the line or not, yet several definitive 
results are known \cite{witte1,witte2,witte3}. In case one of these lattices admits such an action, it is not hard to see that it would also 
provide an affirmative answer to the Question above (see \cite{GOD}).

The proof of Theorem \ref{main} uses the following new description of the group which is the main technical result of the article.
Let $\rho$ be a quasiperiodic labelling.
(Recall this notion from \cite{HydeLodha}, or see Definition \ref{labelling})
Given an $x\in \mathbf{R}$ and $n\in \mathbf{N}$, we define a word $\mathcal{W}(x,n)$ as follows. 
Let $y\in \frac{1}{2}\mathbf{Z}\setminus \mathbf{Z}$ such that $x\in [y-\frac{1}{2},y+\frac{1}{2})$.
Then we define $$\mathcal{W}(x,n)=\rho(y-\frac{1}{2}n)\rho(y-\frac{1}{2}(n-1))...\rho(y)...\rho(y+\frac{1}{2}(n-1))\rho(y+\frac{1}{2} n)$$

For each integer $n\in \mathbf{Z}$, we denote by $\iota_n$ as the 
unique orientation reversing isometry
$\iota_n:[n,n+1)\to (n,n+1]$.
For $x\in \mathbf{R}$, we define the map $\iota:\mathbf{R}\to \mathbf{R}$ as 
$$x\cdot \iota=x\cdot \iota_n\qquad \text{ such that }x\in [n,n+1)\text{ for }n\in \mathbf{Z}$$
In what follows by a \emph{countably singular} piecewise linear homeomorphism,
we mean a piecewise linear homeomorphism with a countable set of singularities (or breakpoints).

\begin{defn}\label{Krho}
Let $K_{\rho}$ be the set
of homeomorphisms $f\in \textup{Homeo}^+(\mathbf{R})$ satisfying the following:
\begin{enumerate}
\item $f$ is a countably singular piecewise linear homeomorphism of $\mathbf{R}$ with a discrete set of singularities that lie in $\mathbf{Z}[\frac{1}{2}]$.
\item $f'(x)$, wherever it exists, is an integer power of $2$.
\item There is a $k_f\in \mathbf{N}$ such that the following holds.
\begin{enumerate}
\item[3.a] Whenever $x,y\in \mathbf{R}$ satisfy that $$x-y\in \mathbf{Z}\qquad \mathcal{W}(x,k_f)=\mathcal{W}(y,k_f)$$ it holds that $$x-x\cdot f = y-y\cdot f$$
\item[3.b] Whenever $x,y\in \mathbf{R}$ satisfy that $$x-y\in \mathbf{Z}\qquad \mathcal{W}(x,k_f)=\mathcal{W}^{-1}(y,k_f)$$ it holds that $$x-x\cdot f=y'\cdot f-y' \qquad \text{ where }y'=y\cdot \iota$$
\end{enumerate}
\end{enumerate}
\end{defn}

It is not hard to check that $K_{\rho}$ is a group.

\begin{remark}\label{WidthRemark}
Note that given an element $f\in K_{\rho}$ and a number $k_f\in \mathbf{N}$ satisfying the conditions of Definition \ref{Krho},
any number $k_f'\in \mathbf{N}$ such that $k_f'>k_f$ also satisfies the conditions of the Definition.
\end{remark}

\begin{thm}\label{characterisation}
$K_{\rho}\cong G_{\rho}$.
\end{thm}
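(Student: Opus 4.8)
The plan is to establish the stronger statement that $K_{\rho}=G_{\rho}$ as subgroups of $\textup{Homeo}^+(\mathbf{R})$, whence the isomorphism in Theorem \ref{characterisation} is immediate. Since both sides are subgroups of the same ambient group, it suffices to prove the two inclusions $G_{\rho}\subseteq K_{\rho}$ and $K_{\rho}\subseteq G_{\rho}$ separately. A useful preliminary normalisation, which I would record first, is the observation that $\mathcal{W}(x,n)$ depends only on the integer interval $[m,m+1)$ containing $x$ (as $y=m+\tfrac12$ is the unique half-integer with $x\in[y-\tfrac12,y+\tfrac12)$). Consequently condition (3) forces the displacement function $x\mapsto x-x\cdot f$ to repeat, up to the orientation-reversal symmetry $\iota$, wherever the length-$(2k_f+1)$ window word repeats. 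Since there are only finitely many words over $\{a,b,a^{-1},b^{-1}\}$ of this length, each element of $K_{\rho}$ has only finitely many ``local shapes'' and in particular has \emph{bounded displacement}.

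For the inclusion $G_{\rho}\subseteq K_{\rho}$, I would recall the explicit generators of $G_{\rho}$ from \cite{HydeLodha} and verify conditions (1)--(3) of Definition \ref{Krho} for each of them. Conditions (1) and (2) -- that the generators are countably singular piecewise linear homeomorphisms with breakpoints in $\mathbf{Z}[\tfrac12]$ and slopes that are integer powers of $2$ -- should read off directly from the construction. The substantive point is condition (3): the Hyde--Lodha generators are built precisely so that their action in a neighbourhood of a point $x$ is dictated by the labels that $\rho$ reads in a window around $x$, so each generator possesses a finite width $k_f$ for which the displacement is governed by $\mathcal{W}(x,k_f)$, with the orientation-reversing case captured by (3.b). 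Because $K_{\rho}$ is a group, checking the finite generating set suffices, and this yields $G_{\rho}\subseteq K_{\rho}$.

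The reverse inclusion $K_{\rho}\subseteq G_{\rho}$ is the heart of the matter. Given $f\in K_{\rho}$ of width $k_f$, the strategy is to reduce its complexity by left-multiplying by suitable elements of $G_{\rho}$ until the identity is reached; an identity $f\cdot g_1\cdots g_k=\textup{id}$ with $g_i\in G_{\rho}$ then exhibits $f$ as an element of $G_{\rho}$. The engine for this reduction is the quasiperiodicity of $\rho$: every finite word occurs with bounded gaps and in the self-similar fashion prescribed by the labelling axioms, so one can produce elements of $G_{\rho}$ whose local displacement pattern matches that demanded by $f$ on any prescribed finite collection of window words. I would organise the reduction as an induction -- for instance on the width $k_f$, or on the number of distinct local shapes appearing in $f$ -- arranging that each multiplication cancels the contribution of one family of window words while respecting the compatibility built into (3.a) and (3.b), with the base case being an element whose displacement is determined by the smallest windows and can be matched directly against the generators.

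The main obstacle will be exactly this reverse inclusion, and more precisely the step of realising the abstractly specified combinatorial data of an arbitrary $f\in K_{\rho}$ by an \emph{actual} product of generators of $G_{\rho}$. Condition (3) is a purely local constraint on displacements, whereas membership in $G_{\rho}$ is an algebraic statement, and bridging the two requires using the full strength of the quasiperiodic labelling axioms to guarantee that the local patterns required by $f$ can be simultaneously and consistently realised inside the group, rather than merely piecewise. Particular care will be needed for the orientation-reversing compatibility (3.b), which encodes the symmetry $\iota$ and must be tracked carefully when assembling generators so that the inductive cancellation does not destroy consistency at the mirror windows.
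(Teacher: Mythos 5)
Your overall architecture (prove the two inclusions, with $G_{\rho}\subseteq K_{\rho}$ checked on generators and the reverse inclusion attacked by an induction on window width) matches the paper's in outline, and your preliminary observations — that $\mathcal{W}(x,n)$ depends only on the unit interval containing $x$, and that condition (3) forces finitely many local shapes — are correct. But the reverse inclusion, which you rightly identify as the heart of the matter, is left as a plan rather than a proof, and the two ideas that actually make it work are absent.

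First, your reduction scheme implicitly treats $f\in K_{\rho}$ as if it were already a union of independent pieces indexed by window words that can be cancelled one family at a time. This fails at the outset: condition (3) constrains displacements but does not give $f$ any fixed points near integers, so ``$f$ restricted to the intervals carrying a given window word'' is not a homeomorphism and cannot be cancelled by left-multiplication without disturbing neighbouring intervals. The paper's Lemma \ref{stabilisation} resolves this: quasi-periodicity together with condition (3.b) forces a point moved right to be mirrored by a point moved left, hence $f$ has a fixed point; multiplying by a commutator $g_2\in G_{\rho}$ and conjugating then makes the element \emph{uniformly stable}, i.e.\ decomposable into atoms with integer endpoints and fixed neighbourhoods of those endpoints. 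Only after this step does a cellular decomposition $f=f_{\zeta_1}\cdots f_{\zeta_m}$ into commuting, disjointly supported pieces exist. Second, even granting the decomposition, you must \emph{realise} each piece inside $G_{\rho}$, and your appeal to quasi-periodicity does not supply a construction. The paper's mechanism is the family of special elements $\lambda_{\omega}(h),\pi_{\omega}(h)$ (the maps that act like $\lambda(h)$ or $\pi(h)$ exactly on the intervals carrying a prescribed window word $W$), and Proposition \ref{SpecialElements} shows these lie in $G_{\rho}$ by an induction on $\sup\{k_1,k_2\}$ whose inductive step is a concrete commutator identity: writing $h$ as a commutator $[f_3,f_4]$ with $\supp(f_3)\subset(\tfrac12,1)$ and $f_4'$ a translate supported in $(0,\tfrac12)$, the commutator $[\lambda_{\omega_1}(f_3),\pi_{\omega_2}(f_4')]$ of two special elements for \emph{shorter} words localises precisely to the intervals where the longer word $W$ occurs, because the two supports intersect only where both shorter words are read in the correct relative position. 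This overlap trick is the engine that converts the local combinatorial constraint into actual group elements, and without it (or a substitute) the induction you propose has no inductive step. Each piece $f_{\zeta_j}$ is then conjugated by an atom-preserving element of $G_{\rho}$ into such a special element, completing the argument.
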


This characterisation provides a useful new definition of the groups $G_{\rho}$ as groups of homeomorphisms of the real line satisfying a natural set of criterion.
This also provides useful new structural results such as the following.

%\marginpar{This was changed into a proposition.}
\begin{prop}\label{mainprop2}
Let $\rho$ be a quasi-periodic labelling.
Given any element $f\in G_{\rho}$, there are elements $g_1,g_2\in G_{\rho}$ such that the following holds:
\begin{enumerate}
\item $f=g_1g_2$.
\item $g_2$ is a commutator in $G_{\rho}$.
\item There is a subgroup $K<G_{\rho}$ such that $K\cong F'\oplus...\oplus F'$ and $g_1\in K$.
\end{enumerate}
\end{prop}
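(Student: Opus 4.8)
The plan is to pass to the concrete model $K_{\rho}$ via Theorem \ref{characterisation} and to separate the behaviour of $f$ near the integer lattice $\mathbf{Z}$ from its behaviour inside the open cells $(n,n+1)$. Throughout I work with a fixed width $k=k_f$ as in Definition \ref{Krho}, enlarging it freely when convenient by Remark \ref{WidthRemark}. The guiding principle is that, by condition $3$ together with the quasi-periodicity of $\rho$, all the local data of $f$ (displacements, germs at breakpoints, germs at integers) is determined by the words $\mathcal{W}(\cdot,k)$ that actually occur, and this list is \emph{finite}. Thus $f$ has only finitely many local shapes, laid out quasi-periodically along $\mathbf{R}$.

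First I would build the target subgroup $K$. Declare two cells $(m,m+1)$ and $(n,n+1)$ equivalent when $\mathcal{W}(m+\tfrac12,k)=\mathcal{W}(n+\tfrac12,k)$, and record also the reflected relation coming from condition $3.b$ via $\iota$. Quasi-periodicity yields finitely many classes $C_1,\dots,C_m$. Let $K$ be the set of elements of $K_{\rho}$ that are the identity on a neighbourhood of $\mathbf{Z}$ and whose cell restrictions are class-uniform in the sense forced by conditions $3.a$–$3.b$. On a single cell such an element is a dyadic piecewise linear homeomorphism trivial near both endpoints, i.e. an element of the commutator subgroup $F'$ of Thompson's group $F$; conditions $3.a$–$3.b$ collapse the restrictions over a whole class to a single element of $F'$ (a reflected class contributing the $\iota$-conjugate, again in $F'$). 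As distinct classes have disjoint supports they commute, giving $K\cong F'\oplus\cdots\oplus F'$ with $m$ summands. Checking that $K$ is genuinely a subgroup of $K_{\rho}$, and that the restrictions exhaust $F'$, is bookkeeping with conditions $1$–$3$ and quasi-periodicity.

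The heart of the argument is to peel off the near-lattice behaviour as a \emph{single} commutator. I would produce $g_2\in K_{\rho}=G_{\rho}$ agreeing with $f$ on a neighbourhood of $f^{-1}(\mathbf{Z})$, so that $g_1:=fg_2^{-1}$ is the identity near $\mathbf{Z}$ and therefore lies in a suitable $K$ by the previous step, while simultaneously arranging $g_2=[u,v]$. Since the near-lattice germ profile of $f$ is finite-type and quasi-periodic, and $g_2$ is unconstrained inside the open cells, there is room to realise it as a commutator; the natural tool is the telescoping identity asserting that if $H=\prod_{i\ge0}v^{i}\,w\,v^{-i}$ converges inside $G_{\rho}$ then $w=[H,v]$. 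The main obstacle is that the near-lattice part meets \emph{every} cell, so naive telescoping neither converges nor respects condition $1$: it would create singularities accumulating in $\mathbf{R}$. I therefore expect to exploit the self-similar quasi-periodic layout of the finitely many germ types, interleaving them along an auxiliary $v\in G_{\rho}$ chosen to push the relevant pieces toward $\pm\infty$ without accumulation, so that the infinite product remains a legitimate countably singular element of $K_{\rho}$ with \emph{discrete} singularities and bounded window. Verifying that $H$ and $v$ satisfy conditions $1$–$3$ and that the telescoping closes up to exactly the required $g_2$ is the delicate step.

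Assembling these gives $f=g_1g_2$ with $g_2$ a commutator and $g_1\in K\cong F'\oplus\cdots\oplus F'$, as claimed. (Downstream, Theorem \ref{main} follows by combining $cl(g_2)=1$ with the uniform perfectness of $F'$, which bounds $cl(g_1)$ on the finite direct sum coordinatewise.) I expect the identification $K\cong F'\oplus\cdots\oplus F'$ and the class-uniformity of $g_1$ to be routine consequences of condition $3$; the genuinely hard point is realising $g_2$ as one commutator subject to the discreteness-of-singularities constraint of Definition \ref{Krho}.
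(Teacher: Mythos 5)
Your target subgroup $K$ and the overall reduction ``kill $f$ near the lattice, then the remainder lives in a direct sum of copies of $F'$'' are sound in outline and parallel the paper's strategy, but the step you yourself flag as delicate --- producing a \emph{single} commutator $g_2\in G_{\rho}$ that agrees with $f$ on a neighbourhood of all of $\mathbf{Z}$ --- is a genuine gap, and I do not believe it can be repaired along the lines you sketch. The telescoping identity $w=[H,v]$ with $H=\prod_{i\ge 0}v^{i}wv^{-i}$ needs the conjugates $v^{i}wv^{-i}$ to have essentially disjoint supports, but every element of $G_{\rho}$ displaces points by a uniformly bounded amount and fixes a point in every sufficiently long interval (Lemma \ref{ElementsProperties}), while the support of your $w=g_2$ is coarsely dense: it meets a neighbourhood of every integer at which $f$ is locally nontrivial, a syndetic set. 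Hence $\supp(v^{i}wv^{-i})=\supp(w)\cdot v^{-i}$ remains coarsely dense for every $i$ and can never be pushed off $\supp(w)$; no choice of $v\in G_{\rho}$ makes the infinite product converge, keeps the singularities discrete, or preserves condition 3 of Definition \ref{Krho}. You would need an entirely different mechanism to realise this $g_2$ as one commutator, and none is supplied.

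The paper sidesteps this by demanding far less of $g_2$. In Lemma \ref{stabilisation} one first finds a single fixed point $p_0$ of $f$ (using quasi-periodicity and condition 3.b), takes $g_2=\lambda(l_2)$ for a commutator $l_2\in F'$ chosen so that $g_2$ matches $f$ only on a neighbourhood of $p_0$ --- a purely local matching problem, easily solved by a commutator --- and then conjugates by minimality so that the difference fixes a neighbourhood of $0$. The point your proposal misses is that condition 3 of Definition \ref{Krho} then \emph{automatically propagates} this local triviality: the element is forced to be trivial near every integer translate carrying the same decorating word, a syndetic set by quasi-periodicity. The remainder therefore only needs to be trivial near a syndetic subset of $\mathbf{Z}$, not all of it, and the summands of $K$ are indexed by equivalence classes of \emph{atoms} (integer intervals of bounded but possibly length greater than one, decorated by their surrounding words) rather than by unit cells. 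With atoms in place of cells, your construction of $K\cong F'\oplus\cdots\oplus F'$ goes through essentially as you describe (this is the content of Proposition \ref{uniformlystable} and the Claim in the paper's proof), and the commutator step becomes elementary. As written, your argument is incomplete at its central step.
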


\section{Preliminaries}

All actions will be right actions, unless otherwise specified.
Given a group action $G<\textup{Homeo}^+(\mathbf{R})$ and a $g\in G$,
we denote by $Supp(g)$ the set $$Supp(g)=\{x\in \mathbf{R}\mid x\cdot g\neq x\}$$
Note that $Supp(g)$ is an open set, and that $\mathbf{R}$ can be replaced by another $1$-manifold.
A homeomorphism $f:[0,1]\to [0,1]$ is said to be \emph{compactly supported in} $(0,1)$
if $\overline{Supp(f)}\subset (0,1)$.
Similarly, a homeomorphism $f:\mathbf{R}\to \mathbf{R}$ is said to be \emph{compactly supported in} $\mathbf{R}$
if $\overline{Supp(f)}$ is a compact interval in $\mathbf{R}$.
A point $x\in \mathbf{R}$ is said to be a \emph{transition point} of $f$ if $$x\in \partial Supp(f)=\overline{Supp(f)}\setminus Supp(f)$$

Our construction uses in an essential way the structure and properties of Thompson's group $F$.
We shall only describe the features of $F$ here that we need, and we direct the reader to 
\cite{CannonFloydParry} and \cite{Belk} for more comprehensive surveys.
Recall that the group $\textup{PL}^+([0,1])$ is the group of orientation preserving piecewise linear homeomorphisms of $[0,1]$.
Recall that $F$ is defined as the subgroup of $\textup{PL}^+([0,1])$ that satisfy the following:
\begin{enumerate}
\item Each element has at most finitely many breakpoints. All breakpoints lie in the set of dyadic rationals, i.e. $\mathbf{Z}[\frac{1}{2}]$.
\item For each element, the derivatives, wherever they exist, are powers of $2$.
\end{enumerate}
By \emph{breakpoint} (or a singularity point) we mean a point where the derivative does not exist.
For $r,s\in \mathbf{Z}[\frac{1}{2}]\cap[0,1]$ such that $r<s$, we denote by $F_{[r,s]}$ the subgroup of elements whose support lies in $[r,s]$.
The following are well known facts that we shall need.
The group $F$ satisfies the following:
\begin{enumerate}
\item $F$ is $2$-generated.
\item For each pair $r,s\in \mathbf{Z}[\frac{1}{2}]\cap[0,1]$ such that $r<s$, the group $F_{[r,s]}$ is isomorphic to $F$ and hence is also $2$-generated.
\item $F'$ is simple and consists of precisely the set of elements $g\in F$ such that $\overline{Supp(g)}\subset (0,1)$.
\end{enumerate}

An interval $I\subseteq [0,1]$ is said to be a \emph{standard dyadic interval}, if it is of the form $[\frac{a}{2^n},\frac{a+1}{2^{n}}]$
such that $a,n\in \mathbf{N}, a<2^n-1$.
The following are elementary facts about the action of $F$ on the standard dyadic intervals.

\begin{lem}\label{TransitiveStandard}
Let $I,J$ be standard dyadic intervals in $(0,1)$.
Then there is an element $f\in F'$ such that:
\begin{enumerate} 
\item $I\cdot f=J$.
\item $f\restriction I$ is linear.
\end{enumerate}
\end{lem}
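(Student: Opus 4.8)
The plan is to prove Lemma \ref{TransitiveStandard} by first reducing to a single canonical target interval, and then chaining two such maps. Specifically, I would first establish the claim in the special case where $J = I_0$ is a fixed reference standard dyadic interval lying well inside $(0,1)$, say $I_0 = [\tfrac{1}{4}, \tfrac{1}{2}]$, and then obtain the general statement by composing a map sending $I$ to $I_0$ with the inverse of a map sending $J$ to $I_0$; since $F'$ is a group, the composition stays in $F'$, and linearity on $I$ is preserved because a composition of maps that are linear on the relevant intervals is again linear there.

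\medskip

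The heart of the argument is a direct construction. Given a standard dyadic interval $I = [\tfrac{a}{2^n}, \tfrac{a+1}{2^n}] \subset (0,1)$, I would build a piecewise linear homeomorphism of $[0,1]$ that maps $I$ affinely onto $I_0$ by specifying its values on a partition of $[0,1]$ into standard dyadic intervals. The standard fact here is that any two partitions of $[0,1]$ into standard dyadic intervals with the same number of pieces are carried one to the other by an element of $F$ that is linear on each piece; this is precisely the tree-pair description of $F$. I would choose a dyadic subdivision of $[0,1]$ that contains $I$ as one of its pieces and a matching subdivision that contains $I_0$ as the corresponding piece, arranging the combinatorics so that the resulting element fixes $0$ and $1$ with derivative conditions forcing it into $F'$. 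Concretely, one ensures $\overline{Supp(g)} \subset (0,1)$, which by the stated characterisation of $F'$ (the third well-known fact, that $F' = \{g \in F \mid \overline{Supp(g)} \subset (0,1)\}$) places $g$ in $F'$.

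\medskip

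The main obstacle I anticipate is the bookkeeping needed to guarantee \emph{simultaneously} that the element lies in $F'$ (support compactly contained in $(0,1)$) and that it is \emph{linear} on all of $I$, rather than merely sending the endpoints of $I$ correctly. The cleanest way to handle this is to note that mapping $I$ to $I_0$ by an affine map already determines the slope ($2^{n-m}$ if $I_0$ has length $2^{-m}$), so I would extend this affine germ to a full element of $F$ by completing both $I$ and $I_0$ to matching standard dyadic subdivisions of $[0,1]$ and using the tree-pair realisation; linearity on $I$ is then automatic since $I$ is a single leaf of the subdivision. To force membership in $F'$, I would pad both subdivisions with extra subdivisions of the leftmost and rightmost standard dyadic intervals so that $g$ is the identity near $0$ and near $1$, which guarantees $\overline{Supp(g)} \subset (0,1)$. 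Applying this construction to both $I$ and $J$ and composing as above completes the proof; the transitivity of $F'$ on standard dyadic intervals, combined with the freedom to prescribe the slope, is exactly what makes both conditions achievable at once.
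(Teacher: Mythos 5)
The paper states Lemma \ref{TransitiveStandard} without proof, as one of a list of ``elementary facts'' about the action of $F$ on standard dyadic intervals, so there is no official argument to compare against; your proposal is the standard tree-pair argument and it is correct. The reduction to a fixed reference interval $I_0$ is harmless but not needed --- one can directly complete $I$ and $J$ to two matching dyadic subdivisions of $[0,1]$ --- and the key points are exactly the ones you identify: $I$ occupies a single leaf of its subdivision, which gives linearity of $f$ on $I$ for free, and making the leftmost leaves (and rightmost leaves) of the two subdivisions coincide as intervals forces $f$ to be the identity near $0$ and $1$, hence $\overline{Supp(f)}\subset(0,1)$ and $f\in F'$ by the paper's stated characterisation of $F'$. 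One small caveat in your padding step: subdividing \emph{only} the leftmost and rightmost leaves is in general not enough to simultaneously equalise the endpoint leaves and the leaf counts on each side of $I$ and $J$; you may also need to split an interior leaf strictly between the endpoint leaf and $I$ (such a leaf exists, or can be created by one split, precisely because $I,J\subset(0,1)$). This is a one-line adjustment within the bookkeeping you already flag, so the proof goes through.
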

\begin{lem}\label{TransitiveStandard2}
Let $I_1,I_2$ and $J_1,J_2$ be standard dyadic intervals in $(0,1)$ such that $$sup(I_1)<inf(I_2)\qquad sup(J_1)<inf(J_2)$$
Then there is an element $f\in F'$ such that:
\begin{enumerate}
\item $I_1\cdot f=J_1$ and $I_2\cdot f=J_2$.
\item $f\restriction I_1$ and $f\restriction I_2$ are linear.
\end{enumerate}
\end{lem}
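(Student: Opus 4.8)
The plan is to realise $f$ as the canonical element of $F$ determined by a suitable pair of standard dyadic subdivisions of $[0,1]$, arranged so that $I_1,I_2$ and $J_1,J_2$ occupy matching combinatorial positions. Recall that an element of $F$ is specified by two standard dyadic partitions $\mathcal{P}=(0=p_0<p_1<\dots<p_m=1)$ and $\mathcal{Q}=(0=q_0<q_1<\dots<q_m=1)$ with the \emph{same} number of pieces, the element acting as the orientation preserving affine map sending $[p_{j-1},p_j]$ onto $[q_{j-1},q_j]$ for each $j$ (its slopes, being ratios of dyadic lengths, are automatically powers of $2$, so it indeed lies in $F$). Lemma \ref{TransitiveStandard} is exactly the one-interval instance of this construction. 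The elementary facts I would use are that any finite family of pairwise disjoint standard dyadic intervals extends to a standard dyadic partition, and that any piece of such a partition may be bisected, increasing the number of pieces by one while remaining a standard dyadic partition.

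First I would fix $N\in\mathbf{N}$ large enough that $I_1,I_2,J_1,J_2\subset(2^{-N},1-2^{-N})$, and note that $[0,2^{-N}]$ and $[1-2^{-N},1]$ are themselves standard dyadic intervals. Since $\sup(I_1)<\inf(I_2)$, the intervals $[0,2^{-N}],I_1,I_2,[1-2^{-N},1]$ are pairwise disjoint standard dyadic intervals listed in increasing order, so they extend to a standard dyadic partition $\mathcal{P}$ in which $I_1,I_2$ and the two extreme intervals are pieces, the remaining pieces tiling the three gaps (left of $I_1$, between $I_1$ and $I_2$, and right of $I_2$). Using $\sup(J_1)<\inf(J_2)$ I build $\mathcal{Q}$ in the same way from $[0,2^{-N}],J_1,J_2,[1-2^{-N},1]$.

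Next I would match the two partitions combinatorially. By bisecting pieces lying strictly inside a gap (which touch neither the $I_i,J_i$ nor the extreme pieces) I can raise the number of pieces in each of the three gaps of $\mathcal{P}$ and of $\mathcal{Q}$ independently; performing, gap by gap, enough bisections to equalise the piece counts of $\mathcal{P}$ and $\mathcal{Q}$, I arrange that $\mathcal{P}$ and $\mathcal{Q}$ have the same total number of pieces and that $I_1,I_2$ occupy the same positions in $\mathcal{P}$ as $J_1,J_2$ do in $\mathcal{Q}$, with the extreme pieces $[0,2^{-N}]$ and $[1-2^{-N},1]$ common to both. Let $f\in F$ be the associated element. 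Then $f$ carries $I_1$ affinely onto $J_1$ and $I_2$ affinely onto $J_2$, giving conditions (1) and (2); and since the first and last pieces of $\mathcal{P}$ and $\mathcal{Q}$ coincide, $f$ restricts to the identity on $[0,2^{-N}]$ and on $[1-2^{-N},1]$, whence $\overline{Supp(f)}\subset(0,1)$ and $f\in F'$ by the quoted description of $F'$.

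The step I expect to be the main obstacle is the bookkeeping of the matching paragraph: one must protect the four distinguished pieces $I_1,I_2$ (respectively $J_1,J_2$) while freely inflating the gap counts, and, crucially, one must guarantee that the resulting element lands in $F'$ rather than merely in $F$ — which is precisely why the common reference pieces $[0,2^{-N}]$ and $[1-2^{-N},1]$ are built into both partitions from the outset. I would emphasise that this partition argument treats all configurations uniformly, including the \emph{interleaved} ones (where the ranges of the two intervals overlap, e.g.\ $\inf(I_2)<\sup(J_1)$). A tempting shortcut — first move $I_1$ to $J_1$ by Lemma \ref{TransitiveStandard} and then repair $I_2$ — fails, since that first homeomorphism need not be affine on $I_2$ and hence distorts it out of the class of standard dyadic intervals; and a "disjoint block'' variant is obstructed because two disjoint standard dyadic intervals need not lie in a common \emph{proper} standard dyadic block. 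The tree-pair construction circumvents both difficulties at once.
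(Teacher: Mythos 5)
Your proof is correct. The paper itself offers no argument for this lemma -- it is listed among the ``elementary facts about the action of $F$ on the standard dyadic intervals'' and left unproved -- so there is nothing to diverge from; your tree-pair construction (extend $\{[0,2^{-N}],I_1,I_2,[1-2^{-N},1]\}$ and $\{[0,2^{-N}],J_1,J_2,[1-2^{-N},1]\}$ to standard dyadic partitions, then bisect inside the three gaps to equalise the piece counts gap by gap) is the standard way to supply one, and all the facts you invoke are genuinely elementary. The two points that actually need care are handled: every gap is nondegenerate (because $\sup(I_1)<\inf(I_2)$, $\sup(J_1)<\inf(J_2)$, and $N$ is chosen so the distinguished intervals sit strictly inside $(2^{-N},1-2^{-N})$), so each gap contains a piece available for bisection; and the common extreme pieces force $f$ to be the identity near $0$ and $1$, which by the paper's characterisation of $F'$ as the compactly supported elements puts $f$ in $F'$ rather than merely in $F$.
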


We fix $\iota:(0,1)\to (0,1)$ as the unique orientation reversing isometry.
We say that an element $f\in F$ is \emph{symmetric}, if $f=\iota\circ f \circ \iota$.
We say that a set $I\subset (0,1)$ is \emph{symmetric} if $I\cdot \iota=I$.
Note that given any symmetric set $I$ with nonempty interior, we can find a 
nontrivial symmetric element $f\in F'$ such that $Supp(f)\subset int(I)$.
We extend the map $\iota$ to $\mathbf{R}$ as follows. For each integer $n\in \mathbf{Z}$, we denote the 
unique orientation reversing isometry 
$$\iota_n:[n,n+1)\to (n,n+1]$$
For $x\in \mathbf{R}$, we define the map $\iota:\mathbf{R}\to \mathbf{R}$ as 
$$x\cdot \iota=x\cdot \iota_n\qquad \text{ such that }x\in [n,n+1)\text{ for }n\in \mathbf{Z}$$

In the paper we shall also use the notations $\iota_{[x,y)}:[x,y)\to (x,y]$ or $\iota_I:I\to I$
to denote the unique orientation reversing isometries between intervals of the form $[x,y)$ and $(x,y]$ (for $x,y\in \mathbf{R}$), or a compact subinterval $I$ of $\mathbf{R}$.
The usage of this notation will be made clear when it occurs. (Note that it differs from the $\iota$ defined above.)

\begin{defn}\label{H}
We fix an element $c_0\in F$ with the following properties:
\begin{enumerate}
\item The support of $c_0$ equals $(0,\frac{1}{4})$ and $x\cdot c_0>x$ for each $x\in (0,\frac{1}{4})$.
\item $c_0\restriction {(0,\frac{1}{16})}$ equals the map $t\to 2t$.
\end{enumerate}
Let $$c_1=\iota \circ c_0\circ \iota \qquad \nu_1=c_0c_1$$
Note that $\nu_1\in F$ is a symmetric element.
We define a subgroup $H$ of $F$ as $$H=\langle F',\nu_1\rangle$$
Finally, we fix $$\nu_2,\nu_3:[0,1]\to [0,1]$$ as chosen homeomorphisms whose supports are contained in $(\frac{1}{16},\frac{15}{16})$ and that generate the group  $F_{[\frac{1}{16},\frac{15}{16}]}$.
\end{defn}

The following is Lemma $2.4$ in \cite{HydeLodha}.

\begin{lem}\label{3gen}
$H$ is generated by $\nu_1,\nu_2,\nu_3$. $H'$ is simple and consists of precisely the set of elements of $H$ (or $F$) that are compactly supported in $(0,1)$.
In particular, $H'=F'$.
\end{lem}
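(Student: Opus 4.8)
The statement bundles together two essentially independent assertions: the identity $H'=F'$ (from which both the simplicity of $H'$ and its description as the set of compactly supported elements follow at once), and the fact that $\nu_1,\nu_2,\nu_3$ generate $H$. The plan is to dispatch the first by a soft group-theoretic argument and then to spend the real effort on the second.

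For $H'=F'$ I would argue as follows. Since $F'\trianglelefteq F$ and $H\leq F$, the subgroup $F'$ is normal in $H$, and by the definition $H=\langle F',\nu_1\rangle$ the quotient $H/F'$ is cyclic, generated by the image of $\nu_1$; in particular it is abelian, so $H'\subseteq F'$. Conversely, $F'$ is simple (a preliminaries fact) and infinite, hence nonabelian, hence perfect, so $F'=(F')'\subseteq H'$ because $F'\leq H$. Thus $H'=F'$. The simplicity of $H'$ and the equality ``$H'$ equals the set of elements of $H$ (or $F$) compactly supported in $(0,1)$'' are then immediate from the corresponding facts about $F'$ recorded in the preliminaries: every element of $H\leq F$ that is compactly supported in $(0,1)$ lies in $F'=H'$, and conversely.

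For the generating claim, write $G=\langle \nu_1,\nu_2,\nu_3\rangle$. One inclusion is easy: $\nu_1\in H$ by definition, and $\nu_2,\nu_3$ lie in $F_{[\frac{1}{16},\frac{15}{16}]}\subseteq F'\subseteq H$, so $G\subseteq H$. For the reverse inclusion it suffices, since $\nu_1\in G$ and $H=\langle F',\nu_1\rangle$, to prove $F'\subseteq G$. Here I would exploit the doubling behaviour of $\nu_1$ at the two endpoints: on $(0,\tfrac1{16})$ we have $x\cdot\nu_1=2x$, and by symmetry $x\cdot\nu_1=2x-1$ on $(\tfrac{15}{16},1)$, so that $\nu_1^{-1}$ contracts toward each endpoint and a direct orbit computation gives $[\tfrac1{16},\tfrac{15}{16}]\cdot\nu_1^{-n}=[2^{-(4+n)},\,1-2^{-(4+n)}]$ for every $n\geq 0$. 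Now take any $g\in F'$; by the preliminaries it is compactly supported, say $\supp(g)\subseteq[\delta,1-\delta]$ with $\delta>0$. Choosing $n$ with $2^{-(4+n)}\leq\delta$ we get $\supp(g)\subseteq[2^{-(4+n)},1-2^{-(4+n)}]$, and since conjugation by $\nu_1^{n}$ sends supports to their $\nu_1^{n}$-images, the element $\nu_1^{-n}g\,\nu_1^{n}$ is supported in $[2^{-(4+n)},1-2^{-(4+n)}]\cdot\nu_1^{n}=[\tfrac1{16},\tfrac{15}{16}]$. As $\nu_1,g\in F$, this conjugate lies in $F$, hence in $F_{[\frac1{16},\frac{15}{16}]}=\langle\nu_2,\nu_3\rangle\subseteq G$; conjugating back shows $g\in G$. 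Therefore $F'\subseteq G$, giving $H=\langle F',\nu_1\rangle\subseteq G$ and finally $G=H$.

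The routine but delicate part --- and the only place I expect to have to be careful --- is the bookkeeping in the last paragraph: tracking supports under the right-action conjugation formula $\supp(h^{-1}gh)=\supp(g)\cdot h$, and verifying that the endpoint orbits $\tfrac1{16}\cdot\nu_1^{-n}=2^{-(4+n)}$ and $\tfrac{15}{16}\cdot\nu_1^{-n}=1-2^{-(4+n)}$ stay inside the regions where $\nu_1$ is exactly the relevant doubling map, so that $\nu_1^{n}$ carries $[2^{-(4+n)},1-2^{-(4+n)}]$ exactly onto $[\tfrac1{16},\tfrac{15}{16}]$. Note that it is \emph{not} claimed that the conjugate subgroup $\nu_1^{-n}\langle\nu_2,\nu_3\rangle\nu_1^{n}$ equals $F_{[2^{-(4+n)},1-2^{-(4+n)}]}$ (it need not, since $\nu_1$ is not linear on all of $[\tfrac1{16},\tfrac{15}{16}]$); one only needs each individual $g\in F'$ to be captured by a single conjugate, which the exhaustion $[2^{-(4+n)},1-2^{-(4+n)}]\nearrow(0,1)$ guarantees.
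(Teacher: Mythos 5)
Your proof is correct. Note that the paper does not actually prove this statement --- it is quoted as Lemma 2.4 of the reference \cite{HydeLodha} --- but your argument is the standard one used there: the cyclic-quotient/perfectness observation gives $H'=F'$ at once, and the contraction of $\nu_1^{-1}$ toward the endpoints lets you conjugate any compactly supported element into $F_{[\frac{1}{16},\frac{15}{16}]}=\langle\nu_2,\nu_3\rangle$, which is exactly the mechanism the construction of $c_0$ (doubling on $(0,\frac{1}{16})$) was designed to enable.
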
 

\begin{defn}\label{labelling}
We consider the additive group $\frac{1}{2}\mathbf{Z}=\{\frac{1}{2}k\mid k\in \mathbf{Z}\}$.
A \emph{labelling} is a map $$\rho:\frac{1}{2}\mathbf{Z}\to \{a,b,a^{-1},b^{-1}\}$$
which satisfies:
\begin{enumerate}
\item $\rho(k)\in\{a,a^{-1}\}$ for each $k\in \mathbf{Z}$.
\item $\rho(k)\in \{b,b^{-1}\}$ for each $k\in \frac{1}{2}\mathbf{Z}\setminus \mathbf{Z}$.
\end{enumerate}

We regard $\rho(\frac{1}{2}\mathbf{Z})$ as a bi-infinite word with respect to the usual ordering of the integers.
A subset $X\subseteq \frac{1}{2}\mathbf{Z}$ is said to be a \emph{block} if it is of the form $$\{k,k+\frac{1}{2},...,k+\frac{1}{2}n\}$$
for some $k\in \frac{1}{2}\mathbf{Z}, n\in \mathbf{N}$.
Note that each block is endowed with the usual ordering inherited from $\mathbf{R}$.
The set of blocks of $\frac{1}{2}\mathbf{Z}$ is denoted as $\mathbf{B}$.
To each block $X=\{k,k+\frac{1}{2},...,k+\frac{1}{2}n\}$, we assign a formal word $$W_{\rho}(X)=\rho(k)\rho(k+\frac{1}{2})...\rho(k+\frac{1}{2}n)$$
which is a word in the letters $\{a,b,a^{-1},b^{-1}\}$.
Such a formal word is called a \emph{subword} of the labelling.

Recall that given a word $w_1...w_n$ in the letters $\{a,b,a^{-1},b^{-1}\}$, the formal inverse of the word is $w_n^{-1}...w_1^{-1}$.
The formal inverse of $W_{\rho}(X)$ is denoted as $W_{\rho}^{-1}(X)$.
%Given a block $X\subseteq \frac{1}{2}\mathbf{Z}$, the formal inverse of the word $W_{\rho}(X)$ is denoted as $W_{\rho}^{-1}(X)$.

A labelling $\rho$ is said to be \emph{quasi-periodic} if the following holds:
\begin{enumerate}
\item For each block $X\in \mathbf{B}$, there is an $n\in \mathbf{N}$ such that whenever $Y\in \mathbf{B}$ is a block of size at least $n$,
then $W_{\rho}(X)$ is a subword of $W_{\rho}(Y)$.
\item For each block $X\in \mathbf{B}$, there is a block $Y\in \mathbf{B}$ such that $W_{\rho}(Y)=W_{\rho}^{-1}(X)$.
\end{enumerate}
Note that by \emph{subword} in the above we mean a string of consecutive letters in the word.
\end{defn}

A nonempty finite word $w_1...w_n$ for $w_i\in \{a,b,a^{-1},b^{-1}\}$ is said to be a \emph{permissible word} if $n$ is odd and the following holds.
For odd $i\leq n$ one has $w_i\in \{a,a^{-1}\}$, and for even $i\leq n$ one has $w_i\in \{b,b^{-1}\}$.

The following is Lemma $3.1$ in \cite{HydeLodha}.

\begin{lem}\label{quasi-periodicLabellings}
Given any permissible word $w_1...w_m$, there is a quasi-periodic labelling $\rho$ of $\frac{1}{2}\mathbf{Z}$ and a block $X\in \mathbf{B}$ satisfying that $W_{\rho}(X)=w_1...w_m$.
\end{lem}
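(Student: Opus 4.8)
The plan is to build the quasi-periodic labelling by a self-similar recursion that keeps the prescribed word $u := w_1\cdots w_m$ at the centre and extracts both conditions of Definition~\ref{labelling} from the combinatorics of the recursion. I fix the connector letter $\beta := b$ and set $v_0 := u$. Since $m$ is odd, $u$ is permissible, and its formal inverse $u^{-1}$ is permissible as well (reversal composed with letter-inversion preserves the parity of positions, so $a$-type stays at odd positions). I then define, for $n\ge 0$,
\[
v_{n+1} := v_n^{-1}\,\beta\,v_n\,\beta\,v_n^{-1}.
\]
First I would check by induction that each $v_n$ is permissible: both $v_n$ and $v_n^{-1}$ begin and end with an $a$-type letter, so inserting the single $b$-type letter $\beta$ at the two junctions preserves the alternation, and the length stays odd ($|v_{n+1}|=3|v_n|+2$). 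The key structural feature is that the middle factor of $v_{n+1}$ is exactly $v_n$ and is \emph{centred} in $v_{n+1}$; hence the centred factors $v_0\subset v_1\subset\cdots$ are nested with a common centre, and their union defines a bi-infinite word which I place on $\tfrac12\mathbf{Z}$ so that the central copy of $v_0=u$ occupies the block $X=\{0,\tfrac12,\dots,\tfrac{m-1}{2}\}$. A short bookkeeping check (using that $m$ is odd, so $\tfrac{m+1}{2}\in\mathbf{Z}$) confirms that the $a$-type letters land on $\mathbf{Z}$ and the $b$-type letters on $\tfrac12\mathbf{Z}\setminus\mathbf{Z}$; this defines a genuine labelling $\rho$ with $W_\rho(X)=u$, as required.

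Condition (2) of quasi-periodicity would follow almost formally. Any subword of $\rho$ has the form $W_\rho(X')$ for some block $X'$, and since $\rho$ is the increasing union of the centred $v_n$, this subword is a factor of some $v_n$. Its formal inverse $W_\rho^{-1}(X')$ is then a factor of $v_n^{-1}$, and $v_n^{-1}$ is literally the prefix (and suffix) of $v_{n+1}$, hence a factor of $\rho$; this yields a block $Y'$ with $W_\rho(Y')=W_\rho^{-1}(X')$. Thus the formal inverse of every subword again occurs as a subword.

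The main work is condition (1), the syndetic (bounded-gap) recurrence of every subword, and this is the step I expect to be the real obstacle. My plan is to track the orientations of the level-$n$ blocks: reading $\rho$ left to right as a concatenation of copies of $v_n$ and $v_n^{-1}$ separated by single connectors, record a $+$ for each $v_n$ and a $-$ for each $v_n^{-1}$. The recursion shows that a $+$ at level $n+1$ expands to the pattern $({-},{+},{-})$ at level $n$, and a $-$ (coming from $v_{n+1}^{-1}=v_n\,\beta^{-1}v_n^{-1}\beta^{-1}v_n$) expands to $({+},{-},{+})$; that is, the orientation sequence is governed by the primitive substitution $+\mapsto {-}{+}{-}$, $-\mapsto {+}{-}{+}$. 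Since each image contains a $+$, every level-$(n+1)$ block contains a $+$-oriented level-$n$ block, so the $+$-blocks occur in $\rho$ with gaps bounded by a constant depending only on $n$. As every occurrence of $v_n$ contains a prescribed factor $W_\rho(X)\subseteq v_n$ at a fixed relative position, $W_\rho(X)$ recurs with bounded gaps; equivalently, every sufficiently long block $Y$ contains $W_\rho(X)$ as a subword, which is exactly condition (1). The care needed here is to make the bounded-gap estimate uniform and to confirm the one delicate point -- that each finite subword really is captured inside some centred $v_n$ -- which holds because $\rho$ is the increasing union of the centred $v_n$.
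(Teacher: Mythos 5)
The paper itself does not prove this lemma---it is imported verbatim as Lemma~3.1 of \cite{HydeLodha}---but your proof is correct and follows essentially the same substitutive strategy used there: a recursion of permissible words built from $v_n$ and $v_n^{-1}$ joined by single $b$-type connectors, with the prescribed word kept centred so that the nested union defines a bi-infinite labelling. The parity bookkeeping works because $|v_n|+1$ is always even, condition~(2) follows since $v_n^{-1}$ occurs inside the centred $v_{n+1}$, and condition~(1) follows from your observation that each level-$(n+1)$ block expands to a level-$n$ pattern containing both orientations, which gives syndetic occurrence of every $v_n$ and hence of every finite subword.
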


Following \cite{HydeLodha}, we recall that to each labelling $\rho$, we 
associate a group $G_{\rho}<\textup{Homeo}^+(\mathbf{R})$ as follows.

\begin{defn}\label{SimpleGroup}
Let $H<\textup{Homeo}^+([0,1])$ be the group defined in Definition \ref{H}.
Recall from Lemma \ref{3gen} that the group $H$ is generated by the three elements $\nu_1,\nu_2,\nu_3$ defined in Definition \ref{H}.
In what appears below, by $\cong_T$ we mean that the restrictions are topologically conjugate via the unique orientation preserving isometry that maps $[0,1]$ to the respective interval.
We define the homeomorphisms $$\zeta_1,\zeta_2,\zeta_3,\chi_1,\chi_2,\chi_3:\mathbf{R}\to \mathbf{R}$$ as follows for each $i\in \{1,2,3\}$ and $n\in \mathbf{Z}$:
$$\zeta_i\restriction [n,n+1]\cong_{T}\nu_i \qquad \text{ if } \rho(n+\frac{1}{2})=b$$
$$\zeta_i\restriction [n,n+1]\cong_T(\iota\circ \nu_i  \circ \iota) \qquad \text{ if }\rho(n+\frac{1}{2})=b^{-1}$$
$$\chi_i\restriction [n-\frac{1}{2},n+\frac{1}{2}]\cong_T \nu_i \qquad \text{ if }\rho(n)=a$$
$$\chi_i\restriction [n-\frac{1}{2},n+\frac{1}{2}]\cong_T (\iota\circ \nu_i \circ \iota)\qquad \text{ if }\rho(n)=a^{-1}$$

The group $G_{\rho}$ is defined as $$G_{\rho}:=\langle \zeta_1,\zeta_2,\zeta_3,\chi_1,\chi_2,\chi_3\rangle<\textup{Homeo}^+(\mathbf{R})$$
We denote the above generating set of $G_{\rho}$ as $$\mathbf{S}_{\rho}:=\{\zeta_1,\zeta_2,\zeta_3,\chi_1,\chi_2,\chi_3\}$$
We also define subgroups $$\mathcal{K}:=\langle \zeta_1,\zeta_2,\zeta_3\rangle\qquad \mathcal{L}:=\langle \chi_1,\chi_2,\chi_3\rangle$$ of $G_{\rho}$ that are both isomorphic to $H$,
and $$\mathcal{K}'\cong \mathcal{L}'\cong F'$$
Note that the definition of $\mathcal{K},\mathcal{L}$ requires us to fix a labelling $\rho$ but we denote them as such for simplicity of notation.
\end{defn}

Note that the group $G_{\rho}$ is defined for every labelling $\rho$.
The following is proved in \cite{HydeLodha}.

\begin{thm}
Let $\rho$ be a quasi-periodic labelling.
Then the group $G_{\rho}$ is simple.
\end{thm}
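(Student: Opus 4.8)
The plan is to show that the normal closure in $G_\rho$ of any nontrivial element is all of $G_\rho$. Fix a nontrivial normal subgroup $N\trianglelefteq G_\rho$ and a nontrivial $g\in N$; the goal is $N=G_\rho$. I will use throughout the decomposition $G_\rho=\langle \mathcal{K},\mathcal{L}\rangle$ with $\mathcal{K}'\cong\mathcal{L}'\cong F'$ together with the fact (Lemma \ref{3gen}) that $F'$ is simple. One structural feature dictates the whole strategy: by the characterisation $K_\rho\cong G_\rho$ (Theorem \ref{characterisation}), the displacement $x\mapsto x-x\cdot f$ of any $f\in G_\rho$ is governed by a window $\mathcal{W}(x,k_f)$ of bounded width, and by quasi-periodicity every window pattern recurs with bounded gaps; consequently no nontrivial element of $G_\rho$ has bounded support. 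The usual fragmentation argument for simplicity of homeomorphism groups is therefore unavailable, and quasi-periodic recurrence must play the role normally played by compactly supported elements.

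The first and hardest step is to manufacture, inside $N$, a nontrivial element lying in a copy of $F'$. Since $g\neq 1$ it displaces the pattern class of some point $x_0$, i.e. $x_0-x_0\cdot g\neq 0$. I would choose an integer cell $[n,n+1]$ on which $g$ acts nontrivially and exploit the overlap of the $\mathcal{K}$-cells $[m,m+1]$ with the $\mathcal{L}$-cells $[m-\tfrac12,m+\tfrac12]$: pick an element $s\in\mathcal{L}'$ whose support meets this cell in a single standard dyadic interval $I$ (transported by $\cong_T$) positioned so that $I$ and $I\cdot g$ are disjoint there. The commutator $[g,s]\in N$ then restricts, on a controlled sub-cell, to a nontrivial element of the local $F'$, while the recurrence of the window pattern forces $[g,s]$ to carry the same $F'$-germ on every cell realising that pattern. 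Matching these germs across cells using Lemmas \ref{TransitiveStandard} and \ref{TransitiveStandard2} and quasi-periodicity condition (1), and correcting orientation using condition (2) and the maps $\iota_n$, produces a nontrivial $u\in N$ lying in $\mathcal{K}'$. The main obstacle is precisely the bookkeeping here: one must choose $s$ so that the commutator is simultaneously nontrivial on one cell and compatible with the pattern-equivariance constraints of Definition \ref{Krho} on every cell, so that the resulting element genuinely belongs to the $F'$-copy $\mathcal{K}'$ rather than merely resembling one locally.

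With a nontrivial $u\in N\cap\mathcal{K}'$ in hand, propagation is routine. The subgroup $N\cap\mathcal{K}'$ is normalised by $\mathcal{K}'$ and is nontrivial, so by simplicity of $F'\cong\mathcal{K}'$ we obtain $\mathcal{K}'\leq N$. Conjugating $\mathcal{K}'$ by the generators $\chi_i$ moves its supporting cells onto the half-integer-shifted cells; together with quasi-periodicity condition (2), which supplies the orientation-reversed patterns realised by the $\iota_n$, the conjugates of $\mathcal{K}'$ cover the $\mathcal{L}$-cells, whence $\mathcal{L}'\leq N$ as well. Thus the normal closure of $g$ already contains both $\mathcal{K}'$ and $\mathcal{L}'$.

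It remains to recover the six generators. Modulo $N$ the groups $\mathcal{K}$ and $\mathcal{L}$ become abelian, since $\mathcal{K}'\cong\mathcal{L}'\cong F'$ are killed and each of $\mathcal{K}/\mathcal{K}'$ and $\mathcal{L}/\mathcal{L}'$ is cyclic (the abelianisation of $H$). Hence $G_\rho/N$ is a quotient of the group generated by the two images of the $\nu_1$-type generators $\zeta_1,\chi_1$. A direct computation, using again the overlap of the $\zeta$- and $\chi$-cells together with quasi-periodicity to express the diagonal $\nu_1$-germ on a cell as a product of $F'$-germs carried by neighbouring overlapping cells, shows these images are trivial; equivalently $G_\rho$ is perfect and its abelianisation dies once $\mathcal{K}'$ and $\mathcal{L}'$ are trivialised. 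Therefore $G_\rho/N$ is trivial, i.e. $N=G_\rho$, and $G_\rho$ is simple.
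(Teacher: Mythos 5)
The theorem you are proving is not actually proved in this paper at all: it is quoted from \cite{HydeLodha}, so the benchmark is the simplicity argument there, and your overall architecture (localize a nontrivial element of a normal subgroup $N$ into a copy of $F'$, invoke simplicity of $F'$, then recover the six generators) is in the same family of ideas. The genuine gap is your central step. You claim that commutating $g$ with $s\in\mathcal{L}'$ and then ``matching germs across cells using Lemmas \ref{TransitiveStandard} and \ref{TransitiveStandard2}'' produces a nontrivial $u\in N\cap\mathcal{K}'$; this does not work as described. The commutator $[g,s]$ has germs that \emph{vary with the window pattern}: on cells where $g$ pointwise fixes the local copy of $I$ it is trivial, on cells realising the reference window it is the germ you want, and on cells with other windows it is something else again. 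An element of the diagonal copy $\mathcal{K}'$, by contrast, must carry the \emph{same} germ (up to flip) on \emph{every} integer cell. The elements supplied by Lemmas \ref{TransitiveStandard} and \ref{TransitiveStandard2} live in a single copy of $F$ and enter $G_{\rho}$ only as diagonal images under $\lambda$ or $\pi$, which act on all cells simultaneously; they cannot selectively correct the germ on one pattern class without disturbing all the others. The tool actually required is the pattern-localized special elements $\lambda_{\omega}(f),\pi_{\omega}(f)$ of Proposition \ref{SpecialElements}, together with an argument that each pattern-localized piece of $[g,s]$ \emph{separately} lies in $N$ --- and manufacturing such pieces inside $N$ is precisely the hard content of the Hyde--Lodha proof. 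As written, ``matching germs'' presupposes the localization it is meant to establish. Note also that aiming for the diagonal $\mathcal{K}'$ makes your task strictly harder than necessary: a nontrivial element of $N\cap\lambda_{\omega}(F')$ for a single occurring pattern $\omega$ already yields $\lambda_{\omega}(F')\leq N$ by simplicity of $F'$, whereas $\mathcal{K}'\leq N$ requires all pattern subgroups of a given width at once, since $\lambda(f)$ is the product of the $\lambda_{\omega_i}(f)$ over all occurring pattern classes.

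Two further missteps are repairable. First, conjugating $\mathcal{K}'$ by the $\chi_i$ does not produce $\mathcal{L}'$, nor does it ``cover the $\mathcal{L}$-cells'' in any sense that yields $\mathcal{L}'\leq N$; the correct move is simply to rerun your localization argument symmetrically with $\pi$ in place of $\lambda$. Second, your final ``direct computation'' killing the images of $\zeta_1,\chi_1$ can be made honest, but again only via special elements: since $Supp(\nu_1)=(0,\frac{1}{4})\cup(\frac{3}{4},1)$, the restriction of $\zeta_1$ to each $\mathcal{L}$-cell $[n-\frac{1}{2},n+\frac{1}{2}]$ is compactly supported in its interior, and since $\zeta_1$ is window-governed (here the elementary inclusion $G_{\rho}\subseteq K_{\rho}$ from Corollary \ref{images} suffices --- you should cite that rather than Theorem \ref{characterisation}, to avoid any appearance of circularity), it factors as a finite product $\prod_i \pi_{\omega_i}(h_i)$ of special elements with $h_i\in F'$. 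The inductive commutator formulas in the proof of Proposition \ref{SpecialElements} express each $\pi_{\omega_i}(h_i)$ as an iterated commutator of elements of $\mathcal{K}'\cup\mathcal{L}'$, hence $\zeta_1\in\langle\mathcal{K}'\cup\mathcal{L}'\rangle\leq N$, and symmetrically $\chi_1\in N$. So your endgame closes up once the machinery is in place; what remains genuinely unproved in your proposal is the extraction, from an arbitrary nontrivial $g\in N$, of a nontrivial pattern-localized $F'$-element of $N$.
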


For simplicity of notation, in what follows we will not explicitly mention the labelling $\rho$
in what we now define.
Recall that given an $x\in \mathbf{R}$ and $n\in \mathbf{N}$, we define a word $\mathcal{W}(x,n)$ as follows. 
Let $y\in \frac{1}{2}\mathbf{Z}\setminus \mathbf{Z}$ such that $x\in [y-\frac{1}{2},y+\frac{1}{2})$.
Then we define $$\mathcal{W}(x,n)=\rho(y-\frac{1}{2}n)\rho(y-\frac{1}{2}(n-1))...\rho(y)...\rho(y+\frac{1}{2}(n-1))\rho(y+\frac{1}{2} n)$$

Given a compact integer interval (i.e. with integer endpoints) $J\subset \mathbf{R}$ and $n_1,n_2\in \mathbf{N}$, we define a word $\mathcal{W}(J,n_1,n_2)$ as follows.
Let $$y_1=inf(J)+\frac{1}{2}\qquad y_2=sup(J)-\frac{1}{2}$$
Then we define $$\mathcal{W}(J,n_1,n_2)=\rho(y_1-\frac{1}{2}n_1)\rho(y_1-\frac{1}{2}(n_1-1))...\rho(y_1)...\rho(y_2)...\rho(y_2+\frac{1}{2}(n_2-1))\rho(y_2+\frac{1}{2}n_2)$$
In case $n_1=n_2=n$ we denote $\mathcal{W}(J,n_1,n_2)$ as simply $\mathcal{W}(J,n)$.

We denote by $\mathcal{W}^{-1}(x,n)$ and $\mathcal{W}^{-1}(J,n)$ as the formal inverses of the words $\mathcal{W}(x,n)$ and $\mathcal{W}(J,n)$ respectively.
We now state a few structural results about the groups $G_{\rho}$ that were proved in \cite{HydeLodha}.
For what follows, we assume that $\rho$ is a quasiperiodic labelling.

\begin{lem}\label{ElementsProperties}
Let $f\in G_{\rho}$ be a nonidentity element such that $$f=w_1...w_k\qquad w_i\in \mathbf{S}_{\rho}\text{ for }1\leq i\leq k$$
Then the following hold:
\begin{enumerate}
\item The set of breakpoints of $f$ is discrete and the set of transition points is also discrete.
\item There is an $m_f\in \mathbf{N}$ such that for any compact interval $J$ of length at least $m_f$, $f$ fixes a point in $J$.
\item For each $x\in \mathbf{R}$ and each $i\leq k$, 
$$x\cdot w_1...w_i\in [x-(k+1),x+(k+1)]$$
%In effect, the values $$\{x\cdot w_1...w_i\mid i\leq k\}$$ are completely determined by the restriction of $\rho$ to the block $$\{y-m,y-m+\frac{1}{2},...,y,y+\frac{1}{2},...,y+m\}\qquad y=\left \lfloor{x}\right \rfloor$$
\end{enumerate}
\end{lem}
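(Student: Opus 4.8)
The plan is to prove the three assertions in the order (3), (1), (2), since the bounded-displacement estimate (3) underlies the other two and the recurrence argument for (2) is by far the hardest. For (3) the key observation is that \emph{every} generator displaces points by strictly less than $1$: each $\zeta_i$ preserves every cell $[n,n+1]$ setwise and fixes its integer endpoints, while each $\chi_i$ preserves every cell $[n-\frac12,n+\frac12]$ and fixes its half-integer endpoints, so for any generator $w\in\mathbf{S}_{\rho}$ and any $x$ one has $|x\cdot w-x|<1$. Writing $x\cdot w_1\cdots w_i$ as the effect of applying the generators one at a time and using the triangle inequality, an induction on $i$ gives $|x\cdot w_1\cdots w_i-x|<i\le k$, which is even sharper than the asserted containment in $[x-(k+1),x+(k+1)]$.

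For (1) I would first treat a single generator. Each $\nu_i\in F$ has finitely many breakpoints, all lying in $\mathbf{Z}[\frac12]$ and bounded away from $\{0,1\}$ (since $\nu_1$ is linear near the endpoints and $\nu_2,\nu_3$ are supported in $(\frac{1}{16},\frac{15}{16})$), and the support of each $\nu_i$ is a finite union of open intervals, so its transition points form a finite set. Spreading $\nu_i$ (or its $\iota$-conjugate) over the cells as in Definition \ref{SimpleGroup} shows that each generator has finitely many breakpoints and transition points per unit cell, hence a discrete set of each, contained in $\mathbf{Z}[\frac12]$. For a product $f=w_1\cdots w_k$ the breakpoint set is contained in the finite union, over $i$, of the preimages under $w_1\cdots w_{i-1}$ of the breakpoint set of $w_i$; since each $w_1\cdots w_{i-1}$ is a homeomorphism of $\mathbf{R}$ (hence maps closed discrete sets to closed discrete sets) and a finite union of discrete sets is discrete, $\mathrm{Break}(f)$ is discrete. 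Discreteness of the transition points then follows from the piecewise-linear structure: the continuous, piecewise-linear function $x\mapsto x\cdot f-x$ has discretely many breakpoints, so on each maximal linear piece its zero set is empty, a single point, or the whole piece; a transition point can only occur at a breakpoint or at such an isolated interior zero, and both are discrete.

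The substance of the lemma is (2), where the quasi-periodicity of $\rho$ is essential. Setting $\delta_f(x)=x\cdot f-x$, I would produce in every sufficiently long interval both a point with $\delta_f>0$ and a point with $\delta_f<0$; continuity of $\delta_f$ and the intermediate value theorem then give a fixed point. The mechanism has three ingredients. \emph{Locality}: by (3) the trajectory $x,\,x\cdot w_1,\dots,x\cdot f$ stays inside $[x-(k+1),x+(k+1)]$, and each generator acts on a cell according to a single label of $\rho$, so there is an $N=N(f)$ of size $\approx 2(k+1)$ with the property that $x-y\in\mathbf{Z}$ and $\mathcal{W}(x,N)=\mathcal{W}(y,N)$ force $\delta_f(x)=\delta_f(y)$ (this is the instance of condition $3.a$ of Definition \ref{Krho} in play). \emph{Recurrence}: assuming $f\ne\mathrm{id}$, fix $x_0$ with, say, $\delta_f(x_0)>0$; by axiom $1$ of quasi-periodicity the word $\mathcal{W}(x_0,N)$ recurs as a subword with bounded gaps, and the matching of $a$- versus $b$-labels forces these occurrences to be integer translates, so by locality there is a point with $\delta_f>0$ in every interval of length at least some $L_+$. \emph{Sign reversal}: inverting a label replaces the local model $\nu_i$ by its reflection $\iota\circ\nu_i\circ\iota$, so at a point whose window equals the formal inverse of $\mathcal{W}(x_0,N)$ the map $f$ looks locally like $\iota\circ f\circ\iota$ and the displacement changes sign; by axiom $2$ of quasi-periodicity such an inverse window exists and by axiom $1$ it too recurs with bounded gaps, giving a point with $\delta_f<0$ in every interval of length at least some $L_-$. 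Taking $m_f=\max(L_+,L_-)$, plus a bounded additive constant for the window width, then yields the conclusion.

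The main obstacle is exactly the \emph{locality} and \emph{sign-reversal} ingredients of (2): one must verify, by unwinding the cell-by-cell definition of $\zeta_i,\chi_i$ from $\rho$, that the entire local behaviour of the word $w_1\cdots w_k$ near $x$—not merely the label of the cell containing $x$—is determined up to integer translation by a bounded window of labels, and that inverting that window corresponds precisely to conjugation by $\iota$ and hence reverses the sign of $\delta_f$. Bounding the relevant window relies on (3) to control how far the trajectory of $x$ can stray, while the sign-reversal step exploits the symmetry built into Definition \ref{SimpleGroup}, whereby the $b\leftrightarrow b^{-1}$ and $a\leftrightarrow a^{-1}$ alternatives are interchanged by the reflection $\iota$. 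These are exactly the phenomena later abstracted into conditions $3.a$ and $3.b$ of Definition \ref{Krho}; carrying them out directly from the construction of $G_{\rho}$ and the two quasi-periodicity axioms, with explicit (if crude) control on $N$ and the recurrence gaps, is the crux of the proof.
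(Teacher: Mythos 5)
The paper contains no proof of this lemma: it is imported verbatim from \cite{HydeLodha} (``we now state a few structural results \ldots that were proved in \cite{HydeLodha}''), so there is no in-paper argument to compare against. Judged on its own terms, your proposal is correct and reconstructs essentially the argument of the cited source. Part (3) is exactly right: every generator preserves each of its unit cells $[n,n+1]$ or $[n-\frac{1}{2},n+\frac{1}{2}]$ and fixes the endpoints, so each letter displaces by less than $1$ and the triangle inequality gives the (even sharper) bound. Part (1) is also fine; the one point worth stating explicitly is that a homeomorphism of $\mathbf{R}$ carries closed discrete sets to closed discrete sets because it is proper, and that the transition points are then caught among the breakpoints together with the at most one isolated zero of $x\mapsto x\cdot f-x$ on each maximal linear piece. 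For part (2) your three ingredients are the right ones, and the one step you defer --- that a matched window of radius about $k+2$ forces $\delta_f(x)=\delta_f(y)$, and that an inverted window forces $\delta_f(y\cdot\iota)=-\delta_f(x)$ --- does go through by the cell-by-cell check you describe: the relevant reflection is $t\mapsto y_x+y_y-t$, whose constant $y_x+y_y$ is an integer because both window centres lie in $\frac{1}{2}\mathbf{Z}\setminus\mathbf{Z}$, so it matches cells to cells, intertwines the local models $\nu_i$ and $\iota\circ\nu_i\circ\iota$, and hence conjugates $f$ to itself on the window while negating displacements; combined with the syndetic recurrence of both the window and its formal inverse (axioms 1 and 2 of quasi-periodicity, with the $a$/$b$ alternation forcing occurrences to be integer translates) and the intermediate value theorem, this yields $m_f$. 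These verifications are precisely the content later packaged as conditions $3.\textup{a}$ and $3.\textup{b}$ of Definition \ref{Krho} and Corollary \ref{images}, so your outline is consistent with how the present paper uses the lemma.
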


\begin{lem}\label{minimal0}
The action of $G_{\rho}$ on $\mathbf{R}$ is minimal.
\end{lem}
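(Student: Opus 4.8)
The goal is to show that every $G_\rho$-orbit is dense in $\mathbf{R}$, equivalently that the only closed $G_\rho$-invariant subsets of $\mathbf{R}$ are $\emptyset$ and $\mathbf{R}$. The plan is to exploit the local structure of the generating set recorded in Definition \ref{SimpleGroup}. The subgroup $\mathcal{K}=\langle \zeta_1,\zeta_2,\zeta_3\rangle\cong H$ preserves each integer interval $[n,n+1]$, and since each $\zeta_i\restriction[n,n+1]$ is an isometric conjugate of the generator $\nu_i$ of $H$ (either $\nu_i$ or $\iota\circ\nu_i\circ\iota$, according to $\rho(n+\tfrac12)$), the image of $\mathcal{K}\restriction[n,n+1]$ is a conjugate of $H$; in particular all of $\nu_1,\nu_2,\nu_3$ fix $0,1$, so $\mathcal{K}$ fixes every integer and acts on each open interval $(n,n+1)$. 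Because $F'\leq H$ and, by Lemma \ref{TransitiveStandard}, $F'$ acts transitively on the standard dyadic intervals of $(0,1)$, this copy of $F'$ (hence $H$, hence $\mathcal{K}\restriction[n,n+1]$) acts minimally on the interior: for any $p\in(n,n+1)$ the orbit $p\cdot\mathcal{K}$ is dense in $[n,n+1]$. Symmetrically, $\mathcal{L}=\langle\chi_1,\chi_2,\chi_3\rangle\cong H$ preserves each half-shifted interval $[n-\tfrac12,n+\tfrac12]$, fixes every half-integer, and acts minimally on each interior $(n-\tfrac12,n+\tfrac12)$.

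The two families of intervals overlap so that every real number lies in the interior of at least one of them: a non-integer lies in the interior of some $\mathcal{K}$-interval $[n,n+1]$, while an integer $n$ lies in the interior of the $\mathcal{L}$-interval $[n-\tfrac12,n+\tfrac12]$. I would then run a propagation (chaining) argument. Let $C=\overline{x\cdot G_\rho}$ be an arbitrary orbit closure; it is closed and $G_\rho$-invariant. Since $x$ lies in the interior of some interval $I$ of $\mathcal{K}$- or $\mathcal{L}$-type, and $C$ is closed and invariant under the corresponding subgroup, the density statement above forces $C$ to contain the whole of $I$. Thus $C$ contains at least one full interval.

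It remains to show that once $C$ contains one full interval it contains all of $\mathbf{R}$, and this is exactly the role of the overlap. If $C\supseteq[n,n+1]$, then $C$ contains the integers $n$ and $n+1$, which lie in the interiors of the $\mathcal{L}$-intervals $[n-\tfrac12,n+\tfrac12]$ and $[n+\tfrac12,n+\tfrac32]$; by minimality of $\mathcal{L}$ on those interiors, $C$ contains both of these intervals. Each such $\mathcal{L}$-interval in turn contains interior points of the adjacent $\mathcal{K}$-intervals (for instance $[n+\tfrac12,n+\tfrac32]$ meets both $(n,n+1)$ and $(n+1,n+2)$), forcing $C$ to contain $[n+1,n+2]$, and likewise $[n-1,n]$. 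By induction $C$ contains every integer interval, so $C=\mathbf{R}$, and every orbit is dense.

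The only point requiring care is the base minimality claim, namely that the restriction of $\mathcal{K}$ to $[n,n+1]$ is genuinely a conjugate of all of $H$ (not a proper subgroup) and that $F'\leq H$ already acts minimally on the open interval. Both are routine bookkeeping: the first follows because the $\zeta_i$ restrict to conjugates of the generators $\nu_i$ of $H$ (Definition \ref{SimpleGroup} and Lemma \ref{3gen}), and the second follows from the transitivity of $F'$ on standard dyadic intervals in Lemma \ref{TransitiveStandard}, since a dense family of such intervals can be carried onto a neighbourhood of any prescribed target point. I do not anticipate a genuine obstacle here; the single subtlety—that the interval endpoints are global fixed points of $\mathcal{K}$ or of $\mathcal{L}$—is precisely what the chaining step is designed to bypass, by alternating between the two overlapping families.
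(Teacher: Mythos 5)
The paper does not actually prove this lemma: it is listed among the structural results quoted from \cite{HydeLodha} in the preliminaries, so there is no in-paper proof to compare against. Your argument is correct and is the standard (and, in \cite{HydeLodha}, essentially the actual) one: the restriction of $\mathcal{K}$ to each $[n,n+1]$ is a topological conjugate of $H\supseteq F'$, whose orbits are dense in that interval by Lemma \ref{TransitiveStandard}; the same holds for $\mathcal{L}$ on the intervals $[n-\tfrac{1}{2},n+\tfrac{1}{2}]$; and since every point lies in the interior of an interval of one of the two overlapping families, a nonempty closed invariant set absorbs one full interval and then, chaining through the overlaps, all of $\mathbf{R}$. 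The only detail worth making explicit is that on a given $[n,n+1]$ all three generators $\zeta_1,\zeta_2,\zeta_3$ are flipped (or not) simultaneously, according to the single value $\rho(n+\tfrac{1}{2})$, so the restriction of $\mathcal{K}$ is genuinely a conjugate of all of $H$ rather than of a group generated by an incompatible mixture of $\nu_i$ and $\iota\circ\nu_i\circ\iota$; you flag this and it is indeed immediate from Definition \ref{SimpleGroup}.
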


\begin{lem}\label{minimal}
For each pair of elements $m_1,m_2\in \mathbf{Z}$ and a closed interval $I\subset (m_1,m_1+1)$,  
there is a word $w_1...w_k$ in the generators $\mathbf{S}_{\rho}$ such that $$I\cdot w_1...w_k\subset (m_2,m_2+1)$$
and $$I\cdot w_1...w_i\subset [inf\{m_1,m_2\},sup\{m_1+1,m_2+1\}]$$ for each $1\leq i\leq k$.
\end{lem}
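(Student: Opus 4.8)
The plan is to reduce the statement to a single ``unit step'' and then iterate. The key sub-claim I would establish is: for every compact interval $J\subset(p,p+1)$ there is a word $u$ in $\mathbf{S}_\rho$ carrying $J$ into $(p+1,p+2)$ with all intermediate images of $J$ contained in $[p,p+2]$, and symmetrically a word carrying $J$ into $(p-1,p)$ with images in $[p-1,p+1]$. Note first that $I$, being closed and contained in the bounded interval $(m_1,m_1+1)$, is compact, so $\inf I>m_1$ and $\sup I<m_1+1$. Granting the sub-claim, I would apply it $\abs{m_2-m_1}$ times, pushing the interval one integer at a time from $(m_1,m_1+1)$ towards $(m_2,m_2+1)$, and concatenate the resulting words to obtain $w_1\cdots w_k$ (the case $m_1=m_2$ being trivial). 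Since the $j$-th unit step keeps the moving interval inside $[m_1+j-1,m_1+j+1]$, and for $1\le j\le m_2-m_1$ every such band lies inside $[\inf\{m_1,m_2\},\sup\{m_1+1,m_2+1\}]$, the required band constraint on all prefixes follows from the usual concatenation bookkeeping.

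The mechanism behind the unit step rests on the interplay of the two integer grids underlying $\mathbf{S}_\rho$. Each $\zeta_i$ preserves every interval $[n,n+1]$ and fixes every integer (the $\nu_i$ fix the endpoints of $[0,1]$), so the subgroup $\mathcal{K}$, and in particular $\mathcal{K}'\cong F'$, can reposition $J$ freely inside $(p,p+1)$ but can never carry a point across an integer. Crossing an integer is instead delegated to $\mathcal{L}$: each $\chi_i$ preserves the half-shifted intervals $[n-\tfrac12,n+\tfrac12]$, on which the restriction of $\mathcal{L}'\cong F'$ realises the full group $F'$ (by $\iota$-invariance of $F'$ this holds regardless of $\rho(n)$), and the integer $n$ sits at the \emph{centre} of this interval, where $F'$ acts nontrivially. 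Hence $\mathcal{L}'$ can slide a standard dyadic interval from the left half $(n-\tfrac12,n)$ across $n$ into the right half $(n,n+\tfrac12)$.

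Concretely, for the rightward unit step I would first use $\mathcal{K}'$ to move $J$ into the window $(p+\tfrac34,p+\tfrac78)$. The one delicate point is that $J$ may straddle the midpoint $p+\tfrac12$ and therefore lie in no proper standard dyadic interval; I would circumvent this by flanking rather than engulfing. Choose standard dyadic intervals $P\subset(p,\inf J)$ and $Q\subset(\sup J,p+1)$, and adjacent target standard dyadic intervals $P',Q'$, with $P'$ left of $Q'$, both inside $(p+\tfrac34,p+\tfrac78)$; Lemma \ref{TransitiveStandard2} yields $f_1\in\mathcal{K}'$ with $P\cdot f_1=P'$ and $Q\cdot f_1=Q'$. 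As $J\subset[\sup P,\inf Q]$ and $f_1$ is increasing, $J\cdot f_1\subset[\sup P',\inf Q']\subset(p+\tfrac34,p+\tfrac78)$, which lies in the left half of the shifted interval $[p+\tfrac12,p+\tfrac32]$ centred at $p+1$. Applying Lemma \ref{TransitiveStandard2} inside that interval using $\mathcal{L}'$ in exactly the same way produces $f_2\in\mathcal{L}'$ with $J\cdot f_1 f_2\subset(p+1,p+\tfrac54)$, a compact subinterval of the next integer interval. Writing $f_1$ as a word in the $\zeta_i^{\pm1}$ and $f_2$ as a word in the $\chi_i^{\pm1}$ gives $u$; since each $\zeta_i^{\pm1}$ preserves $[p,p+1]$ and each $\chi_i^{\pm1}$ preserves $[p+\tfrac12,p+\tfrac32]$, every prefix of $u$ keeps $J$ inside $[p,p+2]$. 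The leftward step is the mirror image, crossing $p$ by means of $\mathcal{L}'$ acting on $[p-\tfrac12,p+\tfrac12]$.

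The main obstacle is precisely the one just isolated: because $\mathcal{K}'$ pins the integers, all integer crossings must be performed by $\mathcal{L}'$ on the half-shifted grid, and the two actions have to be coordinated so that the moving interval never escapes the width-two band. Handling a ``fat'' $J$ that straddles a midpoint, where no proper standard dyadic interval contains it, is what forces the flanking construction through Lemma \ref{TransitiveStandard2} rather than a direct appeal to Lemma \ref{TransitiveStandard}; verifying the prefix-by-prefix containment then reduces to the observation that each individual generator preserves its own grid interval.
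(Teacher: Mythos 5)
Your argument is correct. Note that this paper does not actually prove Lemma \ref{minimal}: it is quoted as one of several structural facts established in the reference \cite{HydeLodha}, so there is no in-paper proof to compare against. That said, your proof is the natural one and, as far as I can tell, complete: the reduction to a unit step, the observation that $\mathcal{K}'$ (resp.\ $\mathcal{L}'$) restricts on each interval of the integer (resp.\ half-shifted) grid to the full standard copy of $F'$ independently of the labelling (because $F'$ is $\iota$-invariant), and the flanking trick via Lemma \ref{TransitiveStandard2} to handle an interval $J$ that straddles a dyadic midpoint all check out, as does the prefix bookkeeping, since each $\zeta_i^{\pm1}$ preserves every $[n,n+1]$ and each $\chi_i^{\pm1}$ preserves every $[n-\frac12,n+\frac12]$, so a rightward unit step starting in $(p,p+1)$ never leaves $[p,p+\frac32]$. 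The only cosmetic remark is that in the case $m_1=m_2$ you should still exhibit a nonempty word (e.g.\ a single generator $\zeta_1$, which preserves $(m_1,m_1+1)$), since the statement asks for a word $w_1\cdots w_k$; this is immediate. I see no gap.
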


The following is an elementary corollary of the third part of Lemma \ref{ElementsProperties}.

\begin{cor}\label{images}
Let $f\in G_{\rho}$.
There is an $m\in \mathbf{N}$ such that
for any $x_1,x_2 \in \mathbf{R}$ so that $x_1-x_2\in \mathbf{Z}$, the following holds:
\begin{enumerate}
\item If $\mathcal{W}(x_1,m)=\mathcal{W}(x_2,m)$ then $$x_1-x_1\cdot f=x_2-x_2\cdot f$$
\item If $\mathcal{W}^{-1}(x_1,m)=\mathcal{W}(x_2,m)$ then $$x_1-x_1\cdot f=x_3\cdot f-x_3\qquad \text{ where } x_3=x_2\cdot \iota$$
\end{enumerate}
\end{cor}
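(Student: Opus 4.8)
The plan is to exploit that each generator $w\in\mathbf{S}_{\rho}$ acts in a \emph{locally determined} way. By Definition \ref{SimpleGroup}, the restriction of $w$ to the fundamental domain containing a point $u$ (an interval $[j,j+1]$ if $w\in\{\zeta_1,\zeta_2,\zeta_3\}$, or $[j-\frac12,j+\frac12]$ if $w\in\{\chi_1,\chi_2,\chi_3\}$) is the transported copy of a fixed model, either $\nu_i$ or $\iota\circ\nu_i\circ\iota$, and which of the two models occurs is dictated by a single label $\rho(j+\frac12)$ or $\rho(j)$. Writing $f=w_1\cdots w_k$ with $w_i\in\mathbf{S}_{\rho}$, part (3) of Lemma \ref{ElementsProperties} guarantees that the whole orbit $\{x\cdot w_1\cdots w_i : 0\le i\le k\}$ lies in $[x-(k+1),x+(k+1)]$; since the label consulted at any orbit point sits within $\frac12$ of that point, the only labels of $\rho$ that influence the value of $x\cdot f$ are those at grid points within distance $k+2$ of the central half-integer of $x$. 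I would therefore fix $m=2(k+2)$, so that the datum $\mathcal{W}(x,m)$ records every label relevant to evaluating $f$ at $x$.

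Two elementary facts about the models drive everything. First, if two fundamental domains $D,D'$ of the same type carry the \emph{same} governing label and $T\colon D\to D'$ is the orientation-preserving translation between them, then $T$ conjugates $w\restriction D$ to $w\restriction D'$; equivalently, $w$ commutes with integer translation across domains with equal labels. Second, if $D,D'$ carry \emph{inverse} governing labels and $R\colon D\to D'$ is the unique orientation-reversing isometry, then $R$ conjugates $w\restriction D$ to $w\restriction D'$; this is exactly the statement that conjugating the model $\nu_i$ by an orientation-reversing isometry produces $\iota\circ\nu_i\circ\iota$, which is precisely how Definition \ref{SimpleGroup} switches between the two models when a label is inverted.

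For part (1), assume $x_1-x_2=N\in\mathbf{Z}$ and $\mathcal{W}(x_1,m)=\mathcal{W}(x_2,m)$. I would prove by induction on $i$ that $x_1\cdot w_1\cdots w_i-x_2\cdot w_1\cdots w_i=N$ for all $0\le i\le k$, the base case being the hypothesis. For the inductive step, the orbit points $u=x_1\cdot w_1\cdots w_{i-1}$ and $v=x_2\cdot w_1\cdots w_{i-1}$ satisfy $u-v=N$ and both lie in the window where, by $\mathcal{W}(x_1,m)=\mathcal{W}(x_2,m)$, the label governing $w_i$ at $u$ and at $v$ coincide; the first elementary fact then gives $u\cdot w_i-v\cdot w_i=N$. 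Taking $i=k$ yields $x_1\cdot f-x_2\cdot f=N$, i.e. $x_1-x_1\cdot f=x_2-x_2\cdot f$.

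For part (2), assume $x_1-x_2\in\mathbf{Z}$ and $\mathcal{W}^{-1}(x_1,m)=\mathcal{W}(x_2,m)$, and set $x_3=x_2\cdot\iota$. Unwinding the definition of the formal inverse, the hypothesis says precisely that $\rho(y_2+t)=\rho(y_1-t)^{-1}$ for all relevant offsets $t$, where $y_1,y_2$ are the central half-integers of $x_1,x_2$; that is, the labels around $x_2$ (equivalently around $x_3$, which lies in the same fundamental domain) are the reflected inverses of those around $x_1$. Let $S\colon u\mapsto c-u$ be the reflection with $S(x_1)=x_3$; here $c=x_1+x_3\in\mathbf{Z}$ because $x_3$ is the reflection of $x_2$ about the half-integer $y_2$ and $x_1-x_2\in\mathbf{Z}$, so $S$ maps each fundamental domain onto another of the \emph{same type} by an orientation-reversing isometry, sending a domain with a given label to one whose label is inverted. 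I would then show by induction that $x_3\cdot w_1\cdots w_i=S(x_1\cdot w_1\cdots w_i)$ for all $i$: the inductive step combines $v=S(u)$ with the second elementary fact (the label at $S(u)$ is the inverse of the label at $u$, and $S$ restricts to an orientation-reversing isometry between the two domains) to conclude $v\cdot w_i=S(u\cdot w_i)$. At $i=k$ this reads $x_3\cdot f=S(x_1\cdot f)=c-x_1\cdot f$, and subtracting gives $x_3\cdot f-x_3=(c-x_1\cdot f)-(c-x_1)=x_1-x_1\cdot f$, as required.

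The routine part is the choice of $m$ and the translation induction of part (1). The main obstacle is part (2): one must set up the reflection $S$ correctly, verify that it preserves the two domain types and inverts labels (this is where $x_1-x_2\in\mathbf{Z}$ enters, forcing $c\in\mathbf{Z}$), and track the orientation reversal carefully — it is exactly the orientation-reversing nature of $S$ that produces the sign change $x_3\cdot f-x_3$ rather than $x_3-x_3\cdot f$ in the conclusion.
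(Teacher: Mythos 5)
Your proposal is correct and follows exactly the route the paper intends: the paper gives no written proof beyond calling this ``an elementary corollary of the third part of Lemma \ref{ElementsProperties}'', and your argument is precisely that --- use the displacement bound of Lemma \ref{ElementsProperties}(3) to localize the relevant labels to a window of size depending on the word length, then propagate the translation (resp.\ reflection) equivariance of the generators through the orbit by induction. The bookkeeping (choice of $m$, the identity $y_1-x_1=y_2-x_2$ making $S(x_1)=x_3$, and the sign flip from orientation reversal) all checks out.
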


Finally, we shall also need the following folklore result (see the Appendix in \cite{ARITHMETIC} for a proof.)

\begin{thm}\label{clF}
Every element in $F'$ can be expressed as a product of at most two commutators of elements in $F'$. 
\end{thm}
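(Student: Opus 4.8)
The plan is to use the description of $F'$ furnished by Lemma \ref{3gen}: an element $g$ lies in $F'$ precisely when $\overline{\mathrm{Supp}(g)}$ is a compact subinterval of $(0,1)$. I would begin with the elementary but crucial observation that being a product of two commutators of elements of $F'$ is a conjugacy invariant under $F'$: since $h[a,b]h^{-1}=[hah^{-1},hbh^{-1}]$ and $F'$ is normal in itself, conjugating $g$ by any element of $F'$ changes neither the property nor the number of commutators needed. Using the transitivity of $F'$ on standard dyadic intervals (Lemmas \ref{TransitiveStandard} and \ref{TransitiveStandard2}), I would then conjugate $g$ into a clean normal position, with $\overline{\mathrm{Supp}(g)}$ contained in a small standard dyadic interval and with ample room left over toward both ends of $(0,1)$. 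Equivalently, after enlarging slightly to a dyadic interval $[a,b]\subset(0,1)$ with $\overline{\mathrm{Supp}(g)}\subset(a,b)$, the element $g$ is trivial near $a$ and near $b$, hence lies in the commutator subgroup of the copy $F_{[a,b]}\cong F$, so the whole problem is concentrated in a single concrete copy of $F$.

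The natural engine is the classical telescoping (``swindle'') identity. Writing $g^{s}$ for the conjugate $s^{-1}gs$, so that $\mathrm{Supp}(g^{s})=\mathrm{Supp}(g)\cdot s$, I would choose $t\in F'$ with an attracting fixed point $\omega$ near which $t$ is linear of slope a negative power of $2$, arranged so that the translates $\mathrm{Supp}(g),\ \mathrm{Supp}(g)\cdot t,\ \mathrm{Supp}(g)\cdot t^{2},\dots$ are pairwise disjoint and accumulate only at $\omega$. The factors $g,g^{t},g^{t^{2}},\dots$ then commute, and for each finite $N$ the product $G_{N}=g\,g^{t}\cdots g^{t^{N-1}}$ is a genuine element of $F'$: it has finitely many dyadic breakpoints and slopes in $2^{\Z}$, because $t$ and $g$ do. A one-line computation using disjointness yields the exact identity $g=[G_{N},t^{-1}]\cdot g^{t^{N}}$, in which $g^{t^{N}}$ is a conjugate of $g$ whose support $\mathrm{Supp}(g)\cdot t^{N}$ lies in an arbitrarily small interval about $\omega$. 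Thus one commutator of $F'$ suffices to localise $g$ into an arbitrarily small standard dyadic interval near a chosen point.

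To produce the bound of \emph{two}, I would combine such localisations in a linked, finite way rather than iterating them blindly. The goal is to drive the two ``residual'' pieces into disjoint regions near the two ends of $(0,1)$ and arrange, using the symmetry supplied by the isometry $\iota$ and the self-similarity of $F$, that they annihilate one another exactly instead of persisting. Concretely, I would split the construction symmetrically so that one commutator handles the part of $g$ pushed toward $0$ and the other the part pushed toward $1$, the room guaranteed by the normalisation of the first paragraph being precisely what allows the two finite telescopes to close with no leftover factor; the transitivity lemmas again provide all auxiliary elements inside $F'$ with dyadic breakpoints and power-of-two slopes.

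The step I expect to be the genuine obstacle is exactly this reconciliation of the swindle with the rigidity of $F$. Every element of $F'$ has only finitely many breakpoints, so the naive one-commutator swindle $G_{\infty}=g\,g^{t}g^{t^{2}}\cdots$ does \emph{not} lie in $F'$ at all: its breakpoints accumulate at the attracting point $\omega$. This is precisely why a single commutator cannot be extracted this way and why the bound should be two rather than one; the real work is to carry out the pair of finite telescopes so that the residual terms cancel identically, while keeping every auxiliary homeomorphism within $F'$ (finitely many dyadic breakpoints, slopes in $2^{\Z}$). Verifying this finite bookkeeping, rather than any single slick identity, is the heart of the argument.
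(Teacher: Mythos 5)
There is a genuine gap, and it sits exactly where you point: the final ``reconciliation'' step is not an argument, and the specific mechanism you propose cannot work as described. Note first that the paper does not prove this statement at all --- it quotes it as a folklore theorem and refers to the appendix of \cite{ARITHMETIC} for a proof --- so the only thing to assess is whether your sketch closes. Your preparatory steps are fine: conjugation invariance of commutator length, normalisation of $\overline{\mathrm{Supp}(g)}$ into a standard dyadic interval via Lemmas \ref{TransitiveStandard} and \ref{TransitiveStandard2}, and the finite telescope identity $g=[G_N,t^{-1}]\cdot g^{t^N}$ (which is correct with the convention $[a,b]=aba^{-1}b^{-1}$ and pairwise disjoint translates of the support). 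You also correctly identify why the one-commutator infinite swindle is unavailable in $F$. But the telescope's residual $g^{t^N}$ is a full conjugate of $g$, exactly as complicated as $g$ itself, merely supported in a smaller interval. Running a second telescope on it produces yet another conjugate residual; the process never terminates. And the ``symmetric'' variant --- one telescope pushing part of $g$ toward $0$ and another pushing part toward $1$, with the two residuals ``annihilating one another exactly'' --- is impossible on its face: the two residuals are supported in disjoint tiny intervals near the two ends of $(0,1)$, and a product of two homeomorphisms with disjoint supports is the identity only if both factors are. So no choice of bookkeeping makes the leftovers cancel; absorbing the residual requires a genuinely different idea, not a refinement of the telescope.

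What the telescope identity actually proves is the statement $g\cdot h^{-1}=[G_N,t^{-1}]$ whenever $h$ is a translate of $g$ supported downstream and disjointly --- i.e.\ a product of a conjugate of $g$ and a disjointly supported conjugate of $g^{-1}$ is a \emph{single} commutator. A correct route to width two has to exploit this at the level of a \emph{decomposition of $g$} rather than a normalisation: one must write $g=c_1c_2$ where each $c_i$ is itself of the form (element)$\cdot$(displaced copy of its inverse), or otherwise produce the two commutators from structural facts about $F$ (e.g.\ realising elements of $F'$ as commutators of elements of $F$ and then correcting the entries into $F'$ at the cost of one more commutator). That is the content of the cited appendix of \cite{ARITHMETIC}, and it is precisely the step your proposal leaves open while acknowledging it is ``the heart of the argument.'' As written, the proposal establishes only that $g$ times a suitable disjoint conjugate of $g^{-1}$ is one commutator; it does not establish the theorem.
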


\section{A characterisation of elements of $G_{\rho}$}\label{Thecharacterisation}

The goal of this section is to establish the characterisation of elements in $G_{\rho}$ as described in the introduction (Definition \ref{Krho}).
In effect, this requires us to prove Theorem \ref{characterisation}.
Throughout this section we fix a quasi periodic labelling $\rho$.
Note that it follows from Corollary \ref{images} that $G_{\rho}\subseteq K_{\rho}$.
So much of the rest of the article shall be devoted to proving that $K_{\rho}\subseteq G_{\rho}$.
The proof of this requires us to establish some preliminary structural results about the group $G_{\rho}$.

The main structural result is Proposition \ref{uniformlystable}.
The proof of the main Theorems and Corollaries will follow from it.
Proposition \ref{uniformlystable} will be proved in a subsequent section, and its proof involves the construction of a certain family of special elements in $G_{\rho}$.

\begin{defn}
A homeomorphism $f\in \textup{Homeo}^+(\mathbf{R})$ is said to be \emph{stable} if there exists an $n\in \mathbf{N}$ 
such that the following holds. For any compact interval $I$ of length at least $n$, there is a 
nonempty open subinterval $J\subset I$ such that $J$ is pointwise fixed by $f$
and $J\cap \mathbf{Z}\neq \emptyset$.
Given a stable homeomorphism $f\in \textup{Homeo}^+(\mathbf{R})$, and an interval $[m_1,m_2]$, the restriction $f\restriction [m_1,m_2]$ is said to be an \emph{atom of $f$}, if the following holds:
\begin{enumerate}
\item $m_1,m_2\in \mathbf{Z}$.
\item There is an $\epsilon>0$ such that, for each $x\in (m_1-\epsilon,m_1+\epsilon)\cup (m_2-\epsilon, m_2+\epsilon)$,  one has
$x\cdot f=x$.
\item For any $m\in (m_1,m_2)\cap \mathbf{Z}$ and any $\epsilon>0$, there is a point $x\in (m-\epsilon, m+\epsilon)$ such that $x\cdot f\neq x$.
\end{enumerate}
Note that given a stable homeomorphism $f$, there is a unique way to express $\mathbf{R}$ as a union of 
integer intervals $\{I_{\alpha}\}_{\alpha\in P}$ such that $f\restriction I_{\alpha}$ is an atom for each $\alpha\in P$ and different 
intervals intersect in at most one endpoint. For simplicity, we will refer just to the intervals $I_{\alpha}$ as the atoms of $f$.

Given an atom $f\restriction I$, we call the intervals $[inf(I),inf(I)+1]$ and $[sup(I)-1,sup(I)]$ as the \emph{head} and the \emph{foot} of the atom, respectively.
Note that is it possible that an atom $I_{\alpha}$ has the same interval as the head and the foot, in which case $|I_{\alpha}|=1$.
Two atoms $f\restriction [m_1,m_2]$ and $f\restriction [m_3,m_4]$ are said to be \emph{conjugate} if there is an integer translation
$h(t)=t+z$ for $z\in \mathbf{Z}$ such that $$f\restriction [m_1,m_2]=h^{-1}\circ f \circ h\restriction [m_3,m_4]$$
and \emph{flip-conjugate} if there is an integer translation
$h(t)=t+z$ for $z\in \mathbf{Z}$ such that $$f\restriction [m_1,m_2]=h^{-1}\circ (\iota_{[m_1,m_2]}\circ f\circ \iota_{[m_1,m_2]})\circ h\restriction [m_3,m_4]$$
where $$\iota_{[m_1,m_2]}:[m_1,m_2]\to [m_1,m_2]$$ is the unique orientation reversing isometry.

For a fixed $n\in \mathbf{N}$ we consider the set of \emph{decorated atoms}: $$\mathcal{T}_n(f)=\{(I_{\alpha},n)\mid \alpha \in P\}$$
We say that a pair of decorated atoms
$(I_{\alpha}, n)$ and $(I_{\beta}, n)$ are equivalent if either of the following holds:
\begin{enumerate}
\item $I_{\alpha},I_{\beta}$ are conjugate  and $\mathcal{W}(I_{\alpha},n)=\mathcal{W}(I_{\beta},n)$.
\item $I_{\alpha},I_{\beta}$ are flip-conjugate  and $\mathcal{W}(I_{\alpha},n)=\mathcal{W}^{-1}(I_{\beta},n)$.
\end{enumerate}

The element $f$ is said to be \emph{uniformly stable}, if it is stable and there are finitely many equivalence classes of decorated atoms for each $n\in \mathbf{N}$.
Note that if there are finitely many equivalence classes of decorated atoms of $f$ for some $n\in \mathbf{N}$, then this holds for any $n\in \mathbf{N}$.
This is true since there are finitely many words of length $n$ in $\{a,b,a^{-1},b^{-1}\}$.

Let $\zeta$ be an equivalence class of elements in $\mathcal{T}_n(f)$.
We define the homeomorphism $f_{\zeta}$ as $$f_{\zeta}\restriction I_{\alpha}=f\restriction I_{\alpha}\text{ if }(I_{\alpha},n)\in \zeta$$
$$f_{\zeta}\restriction I_{\alpha}=id\restriction I_{\alpha}\text{ if }(I_{\alpha}, n)\notin \zeta$$
If $\zeta_1,...,\zeta_m$ are the equivalence classes of elements in $\mathcal{T}_n (f)$,
then the list of homeomorphisms $f_{\zeta_1},...,f_{\zeta_m}$ is called the \emph{cellular decomposition of $f$}.
\end{defn}

\begin{lem}\label{stabilisation}
Let $g\in K_{\rho}$.
Then there exist $g_1,g_2 \in G_{\rho}$, where $g_2$ is a commutator of elements in $G_{\rho}$, such that $g_1^{-1} (g g_2^{-1}) g_1\in K_{\rho}$ is uniformly stable.
\end{lem}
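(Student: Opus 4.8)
The plan is to take an arbitrary $g\in K_{\rho}$ and modify it by an explicit conjugation and an explicit commutator so that the resulting element becomes uniformly stable in the sense just defined. The key structural fact I would exploit is the second part of Lemma \ref{ElementsProperties} together with the definition of $K_{\rho}$: by condition $3$ of Definition \ref{Krho} there is a width $k_g\in\mathbf{N}$ so that the displacement $x-x\cdot g$ depends only on the integer-translation class of $x$ and on the word $\mathcal{W}(x,k_g)$ (up to the flip encoded by $3.b$). Since there are only finitely many words of length $2k_g+1$ in $\{a,b,a^{-1},b^{-1}\}$, the homeomorphism $g$ already has only finitely many ``types'' of local behaviour. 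The obstruction to $g$ being stable is therefore not a failure of finiteness of types, but rather that $g$ need not fix an open interval meeting $\mathbf{Z}$ inside every sufficiently long interval: stability demands pointwise-fixed open subintervals straddling an integer, and a generic $g\in K_{\rho}$ might move points near every integer.

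The remedy I would use is to precompose $g$ with a well-chosen commutator $g_2\in[G_{\rho},G_{\rho}]$ that creates such fixed intervals. First I would observe, via Lemma \ref{ElementsProperties}(2), that $g$ has fixed points with bounded gaps: there is $m_g$ such that $g$ fixes a point in every interval of length $m_g$. The idea is then to build an element of $G_{\rho}$ supported near the integers that, when multiplied against $g$, forces $gg_2^{-1}$ to be the identity on genuinely open neighbourhoods meeting $\mathbf{Z}$, quasi-periodically. The natural tool here is the abundance of elements of $\mathcal{K}'\cong F'$ and $\mathcal{L}'\cong F'$ (from Definition \ref{SimpleGroup}), which act on each integer cell $[n,n+1]$ and each half-integer cell $[n-\tfrac12,n+\tfrac12]$ by conjugates of elements of $F'$, hence can be arranged to be the identity on a fixed neighbourhood of the integers while being nontrivial elsewhere. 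Using Theorem \ref{clF}, any element I produce inside $F'$ (and hence inside $\mathcal{K}'$ or $\mathcal{L}'$) can be written as a product of two commutators; to land exactly a single commutator $g_2$ I would instead construct $g_2$ directly as one commutator whose local effect cancels the part of $g$ that obstructs fixing an open integer-neighbourhood, relying on the transitivity Lemmas \ref{TransitiveStandard} and \ref{TransitiveStandard2} to realise the required local conjugations inside $F'$.

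The conjugation by $g_1$ plays the complementary role of normalising the heads and feet of the resulting atoms so that the decorated-atom equivalence classes become finite in number. After multiplying by $g_2^{-1}$ the element $gg_2^{-1}$ will be stable, so by the uniqueness statement in the definition it decomposes canonically into atoms $\{I_\alpha\}$; what remains is to check that only finitely many equivalence classes of decorated atoms $(I_\alpha,n)$ occur. Here the quasi-periodicity of $\rho$ (Definition \ref{labelling}) and the finiteness of displacement-types from $K_{\rho}$ combine: two atoms with the same labelling word $\mathcal{W}(I_\alpha,n)$ and conjugate (or flip-conjugate) underlying intervals must carry the same restricted homeomorphism, because the $K_{\rho}$-conditions $3.a$ and $3.b$ pin down the displacement pointwise from the word. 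I would use $g_1\in G_{\rho}$ precisely to rigidify the interval lengths $|I_\alpha|$ to a bounded set, so that ``conjugacy class of underlying interval'' takes finitely many values; then finiteness of labelling words of bounded length gives finitely many decorated-atom classes, i.e. uniform stability.

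The main obstacle I anticipate is the simultaneous control of two things that pull in opposite directions: producing the fixed open integer-neighbourhoods (which requires the correcting element to be trivial near each integer) while keeping the correction to a single commutator $g_2\in G_{\rho}$ and keeping the atom lengths uniformly bounded. In particular, I expect the delicate point to be ensuring that after the correction the atoms do not become arbitrarily long, since the gaps between the newly created fixed intervals are governed by $m_g$ and by the period structure of $\rho$, and a careless choice of $g_2$ could merge neighbouring atoms into unboundedly long ones, destroying the finiteness of conjugacy types. Managing this will require choosing $g_2$ so that it fixes an open interval around \emph{every} integer (not just some), using the quasi-periodic recurrence of each finite labelling word to guarantee that the correcting pattern can be applied consistently across all of $\mathbf{R}$ by a single group element.
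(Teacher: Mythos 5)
Your overall architecture (a commutator correction $g_2$ to create pointwise-fixed open intervals, a conjugation $g_1$ to position them over an integer, and the $K_{\rho}$ axioms plus quasi-periodicity to get finiteness of decorated-atom classes) matches the paper's, but the construction of $g_2$ --- which is the heart of the lemma --- is left unresolved, and the strategy you propose for it would not work as stated. You want $g_2$ to arrange fixed open neighbourhoods of \emph{every} integer, and you suggest building it from elements of $\mathcal{K}'$ or $\mathcal{L}'$ that are ``the identity on a fixed neighbourhood of the integers''; but to make $x\cdot gg_2^{-1}=x$ on a neighbourhood you need $g_2$ to \emph{agree with} $g$ there, not to be the identity there, and an element of $G_{\rho}$ agreeing with $g$ near every integer cannot exist in general (near an integer $n$ that $g$ displaces by a nonzero integer amount, such a $g_2$ would have to displace $n$ as well, which is incompatible with being built cellwise from germs of $F'$-elements that are trivial at cell endpoints). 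The simplification you are missing is that \emph{one} fixed open interval suffices: by quasi-periodicity and condition $3.\textup{b}$ of Definition \ref{Krho}, $g$ has at least one fixed point $p_0$; since $g$ is locally a dyadic PL map fixing $p_0$, one chooses a single commutator $l_2\in F'$ with $g_2=\lambda(l_2)$ agreeing with $g$ on a small neighbourhood of $p_0$, so that $gg_2^{-1}$ pointwise fixes an open interval $I$; minimality (Lemma \ref{minimal0}) gives $g_1$ with $0\cdot g_1^{-1}\in I$, so $g_1^{-1}(gg_2^{-1})g_1$ fixes a neighbourhood of $0$. After that no further construction is needed: conditions $3.\textup{a}$ and $3.\textup{b}$ force this element to fix a neighbourhood of every integer whose surrounding word of width $k$ matches that of $0$ (or its formal inverse), and quasi-periodicity makes this set of integers syndetic --- which simultaneously yields stability, the bounded atom lengths you were worried about, and (via finiteness of words of bounded length) uniform stability.

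A secondary but genuine logical slip: you invoke Lemma \ref{ElementsProperties}(2) for $g\in K_{\rho}$. That lemma is stated for elements of $G_{\rho}$ written as words in $\mathbf{S}_{\rho}$, and the whole point of this part of the argument is to prove $K_{\rho}\subseteq G_{\rho}$; using it here is circular. The existence of a fixed point for $g\in K_{\rho}$ must be derived directly from Definition \ref{Krho}: if $x\cdot g>x$, quasi-periodicity provides $y$ with $x-y\in\mathbf{Z}$ and $\mathcal{W}^{-1}(y,k_g)=\mathcal{W}(x,k_g)$, and condition $3.\textup{b}$ then produces a point moved in the opposite direction, hence a fixed point in between.
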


\begin{proof}
Since $g\in K_{\rho}$, we know that there is a constant $k_f$ that witnesses the conditions of Definition \ref{Krho}.
Let $x\in \mathbf{R}$ be such that $x\cdot g>x$. Since $\rho$ is quasi-periodic, there is a $y\in \mathbf{R}$ such that 
$x-y\in \mathbf{Z}$ and $$\mathcal{W}^{-1}(y,k_f)=\mathcal{W}(x,k_f)$$
It follows from $3.\textup{b}$ in Definition \ref{Krho} that $y'\cdot g<y'$ for $y'=y\cdot \iota$.
It follows that $g$ admits a fixed point $p_0\in \mathbf{R}$.
A similar conclusion is achieved starting with a point $x$ for which $x \cdot g < x$.

Assume that $p_0\in \mathbf{R}\setminus \mathbf{Z}$.
The case when $p_0\in \mathbf{R}\setminus (\frac{1}{2}\mathbf{Z}\setminus \mathbf{Z})$ is dealt with similarly.
We find an element $l_2\in F'$ such that $l_2$ is a commutator in $F'$ and $g_2=\lambda(l_2)$ coincides with $g$ on a neighborhood of $p_0$.
Note that this is possible since $p_0$ is a fixed point of $g$ and $g$ satisfies the first condition of Definition \ref{Krho}.
It follows that $gg_2^{-1}$ fixes pointwise a subinterval $I$ of nonempty interior.

Since the action of $G_{\rho}$ on $\mathbf{R}$ is minimal (see Lemma \ref{minimal}),
we can find $g_1\in G_{\rho}$ such that $0\cdot g_1^{-1}\in I$. It follows that $g_1^{-1} (g g_2^{-1}) g_1$ fixes a neighborhood of $0$. 
From an application of quasi periodicity and Definition \ref{Krho}, it follows that this element is uniformly stable.
\end{proof}

The core of the proof of Theorem \ref{characterisation} reduces to the following Proposition.

\begin{prop}\label{uniformlystable}
Consider a uniformly stable element $f\in K_{\rho}$.
There is an $n\in \mathbf{N}$ such that the following holds.
Let $\zeta_1,...,\zeta_m$ be the equivalence classes of $\mathcal{T}_n (f)$.
Then $f_{\zeta_1},...,f_{\zeta_m}\in G_{\rho}$.
In particular, since $f = f_{\zeta_1} \cdots f_{\zeta_m}$, it follows that $f\in G_{\rho}$.
\end{prop}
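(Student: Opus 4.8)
The plan is to show that each cellular component $f_{\zeta_i}$ lies in $G_\rho$ by realizing it directly as an explicit element built from the generators $\mathbf{S}_\rho$. Since $f$ is uniformly stable, fix an $n$ large enough that $n \geq k_f$ (so that the decoration $\mathcal{W}(I_\alpha,n)$ records enough of the labelling to control $f$ on $I_\alpha$ via Definition \ref{Krho}), and also large enough to separate the finitely many equivalence classes. The key structural observation is that within a single equivalence class $\zeta$, all the atoms $f\restriction I_\alpha$ are ``the same'' up to integer translation (for the conjugate case) or translation composed with the flip $\iota$ (for the flip-conjugate case), AND they sit over blocks of $\rho$ whose decorated words agree. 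By conditions $3.\mathrm{a}$ and $3.\mathrm{b}$ of Definition \ref{Krho}, this forces $f$ to act identically (or via the mirror) on all atoms of $\zeta$, so that $f_{\zeta}$ is a genuinely ``homogeneous'' object: a single atom pattern stamped out repeatedly along $\mathbf{R}$, in registration with the labelling.

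The heart of the argument is therefore to construct, for a fixed equivalence class $\zeta$, an element of $G_\rho$ equal to $f_\zeta$. First I would analyze a single atom $f\restriction[m_1,m_2]$. By stability, $f$ is the identity near each integer point in the head and foot and near the endpoints, so on each unit interval $[j,j+1]\subset[m_1,m_2]$ the restriction $f\restriction[j,j+1]$ is a homeomorphism fixing a neighborhood of both endpoints — i.e., (after the isometry $\cong_T$ to $[0,1]$) an element compactly supported in $(0,1)$, which by Lemma \ref{3gen} lies in $F' = H'$. The generators $\zeta_i$ and $\chi_i$ are precisely the tools that let one install a prescribed element of $H$ (hence of $F'$) on a unit interval $[n,n+1]$ or a half-shifted interval $[n-\frac12,n+\frac12]$, with the orientation dictated by the labelling values $\rho(n+\frac12)$ and $\rho(n)$. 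So on a single atom I would write $f\restriction I$ as a product of such generator-supported pieces, using Lemmas \ref{TransitiveStandard} and \ref{TransitiveStandard2} to move the required elements of $F'$ into the correct dyadic subintervals and the symmetric/$\iota$-conjugated structure of $H$ (Definition \ref{H}) to match orientations.

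The main obstacle, and the reason uniform stability rather than mere stability is needed, is to perform this construction \emph{simultaneously and coherently across infinitely many atoms} so that the resulting infinite product is a single well-defined element of $G_\rho$ (a finite word in $\mathbf{S}_\rho$), not merely a homeomorphism matching $f_\zeta$ atom-by-atom. Here the finiteness of the number of equivalence classes is essential: because all atoms in $\zeta$ are translates (or flipped translates) sitting over matching decorated words, a single fixed word $w_1\cdots w_k$ in the generators will automatically reproduce the correct behaviour over \emph{every} atom of $\zeta$, since the generators $\zeta_i,\chi_i$ are themselves defined uniformly across $\mathbf{Z}$ according to $\rho$, with built-in flips governed by the signs of the labelling. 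Thus I expect the crux to be a bookkeeping argument: choosing $n$ so that the decorated word pins down both the atom and its surrounding labelling context, then verifying that the same generator word acts correctly on conjugate atoms (using $3.\mathrm{a}$) and on flip-conjugate atoms (using $3.\mathrm{b}$ together with the $\iota$-symmetry of the $\zeta_i,\chi_i$). Once each $f_{\zeta_i}\in G_\rho$ is established, the identity $f = f_{\zeta_1}\cdots f_{\zeta_m}$ (a finite product over the finitely many classes) immediately yields $f\in G_\rho$.
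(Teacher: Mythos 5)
Your proposal correctly identifies that within an equivalence class $\zeta$ the atoms are coherent (translates or flipped translates sitting over matching decorated words), and that the final step $f = f_{\zeta_1}\cdots f_{\zeta_m}$ is immediate once each component is shown to lie in $G_{\rho}$. But there is a genuine gap at the central step, and it is not bookkeeping. The generators $\zeta_i,\chi_i$ (and more generally every element of $\mathcal{K}=\lambda(H)$ and $\mathcal{L}=\pi(H)$) act nontrivially on \emph{every} unit interval of $\mathbf{R}$ simultaneously: $\lambda(h)$ installs a copy of $h$ (or of $\iota\circ h\circ\iota$) on $[n,n+1]$ for \emph{all} $n\in\mathbf{Z}$, not just for those $n$ with $[n,n+1]$ inside an atom of class $\zeta$. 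So a word in $\mathbf{S}_{\rho}$ that reproduces $f$ on the atoms of $\zeta$ will, for exactly the reason you cite (the generators are defined uniformly across $\mathbf{Z}$ in registration with $\rho$), also act nontrivially on the atoms of the other classes and on the regions where $f_{\zeta}$ must be the identity. Your outline provides no mechanism for cancelling this unwanted action, and no naive product of generators can do so.

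This localization problem is precisely what the paper's Proposition \ref{SpecialElements} solves: it constructs, by induction on the length of the decorating word using iterated commutators (arranged so that over every ``wrong'' word the supports of the $\lambda$-type and $\pi$-type factors are disjoint and the commutator dies there), elements $\lambda_{\omega}(h)\in G_{\rho}$ supported only on the unit intervals whose surrounding labelling matches the prescribed word $W$. That proposition is the technical heart of the argument, not a routine verification. The paper's proof of Proposition \ref{uniformlystable} also needs a second ingredient you omit: for atoms of length greater than one, the support of $f_{\zeta_j}$ inside an atom spreads over several unit intervals, so one first conjugates by an atom-preserving element $g\in\mathcal{M}_f$ (itself built from special elements, Lemma \ref{atompreserving}) to gather the support into the head of each atom, after which $g^{-1}f_{\zeta_j}g$ is literally a single special element $\lambda_{\omega}(h)$ and $f_{\zeta_j}=g\lambda_{\omega}(h)g^{-1}\in G_{\rho}$. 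Without both of these steps your outline cannot be completed.
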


\section{Special elements in $G_{\rho}$}

The proof of Proposition \ref{uniformlystable} requires the construction of a certain family of \emph{special elements} in $G_{\rho}$.
We define and construct them in this section.
The construction of such elements is also a useful tool to study the groups $G_{\rho}$.
Throughout the section we assume that $\rho$ is a quasi periodic labelling.

Recall the definitions of the subgroups $\mathcal{K}, \mathcal{L} \leq G_{\rho}$ from the preliminaries.
Recall that in \cite{HydeLodha} we fixed notation for the natural isomorphisms $$\lambda:H\to \mathcal{K}\qquad \pi:H\to \mathcal{L}$$
as follows for each $f\in H, n\in \mathbf{Z}$.
 $$\lambda(f)\restriction [n,n+1]\cong_T f \qquad \text {if }\rho(n+\frac{1}{2})=b$$
$$\lambda(f)\restriction [n,n+1]\cong_T (\iota\circ f\circ \iota)\qquad \text{ if }\rho(n+\frac{1}{2})=b^{-1}$$
$$\pi(f)\restriction [n-\frac{1}{2},n+\frac{1}{2}]\cong_T f\qquad \text {if }\rho(n)=a$$
$$\pi(f)\restriction [n-\frac{1}{2},n+\frac{1}{2}]\cong_T (\iota\circ f\circ \iota)\qquad \text{ if }\rho(n)=a^{-1}$$
We also denote the naturally defined inverse isomorphisms as:
$$\lambda^{-1}:\mathcal{K}\to H\qquad \pi^{-1}:\mathcal{L}\to H$$

We consider the set of triples $$\Omega= \{a,b,a^{-1},b^{-1}\}^{<\mathbf{N}}\times \mathbf{N}\times \mathbf{N}$$
Any element $\omega\in \Omega$ is a triple represented as $(W,k_1,k_2)$.

\begin{defn}
Given an element $f\in F'$ and $\omega$ as above we define an element $\lambda_{\omega}(f)\in \textup{Homeo}^+(\mathbf{R})$ as follows.

For each $n\in \mathbf{Z}$:

$$\lambda_{\omega}(f)\restriction [n,n+1]=\lambda(f)\restriction [n,n+1]\text{ if }\begin{cases}
 \mathcal{W}([n,n+1],k_1,k_2)=W\\\text{ or } \\
 \mathcal{W}([n,n+1],k_2,k_1)=W^{-1}\end{cases}$$ 
$$ \lambda_{\omega}(f)\restriction [n,n+1]=id\restriction [n,n+1] \text{ otherwise }$$

Similarly, we define the special elements $\pi_{\omega}(f)\in \textup{Homeo}^+(\mathbf{R})$ as follows.

For each $n\in \frac{1}{2}\mathbf{Z}\setminus \mathbf{Z}$:

$$\pi_{\omega}(f)\restriction [n,n+1]=\pi(f)\restriction [n,n+1]\text{ if }\begin{cases}
 \mathcal{W}([n,n+1],k_1,k_2)=W\\ \text{ or } \\
 \mathcal{W}([n,n+1],k_2,k_1)=W^{-1}\end{cases}$$ 
$$\pi_{\omega}(f)\restriction [n,n+1]=id\restriction [n,n+1] \text{ otherwise } $$
\end{defn}

Given $\omega=(W,k_1,k_2)$ where $W=w_{-k_1}...w_0...w_{k_2}$, we call $w_0$ the \emph{central letter} of the word $W$.
\begin{remark}
Note the order of appearance of $k_1,k_2$ in $\mathcal{W}([n,n+1], \cdot , \cdot)$ in the above definition.
\end{remark}

The following is a direct consequence of the definitions.
\begin{lem}\label{SpecialElements1}
Consider $\omega_1=(W_1,k_1,k_2)$ and $\omega_2=(W_2,k_2,k_1)$
such that $W_1=W_2^{-1}$.
Then it follows that for each $f\in F'$ $$\lambda_{\omega_1}(f)=\lambda_{\omega_2}(\iota\circ f\circ \iota)\qquad \pi_{\omega_1}(f)=\pi_{\omega_2}(\iota\circ f\circ \iota)$$
In particular, by symmetry it follows that:
\begin{enumerate}
\item $\lambda_{\omega_1}(f)\in G_{\rho}$ for each $f\in F'$ if and only if $\lambda_{\omega_2}(f)\in G_{\rho}$ for each $f\in F'$.
\item $\pi_{\omega_1}(f)\in G_{\rho}$ for each $f\in F'$ if and only if $\pi_{\omega_2}(f)\in G_{\rho}$ for each $f\in F'$.
\end{enumerate}
\end{lem}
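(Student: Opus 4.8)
The plan is to verify the identity $\lambda_{\omega_1}(f)=\lambda_{\omega_2}(\iota\circ f\circ\iota)$ (and its counterpart for $\pi$) by checking that the two homeomorphisms agree on each integer interval $[n,n+1]$, $n\in\mathbf{Z}$. By definition the restriction of each of $\lambda_{\omega_1}(f)$ and $\lambda_{\omega_2}(\iota\circ f\circ\iota)$ to $[n,n+1]$ is either a prescribed copy of the relevant element of $H$ or the identity, so it suffices to establish two things: (a) the two maps are non-trivial on exactly the same set of intervals $[n,n+1]$, and (b) on each such interval their restrictions coincide. Both reduce to unwinding the definitions of $\lambda_{\omega}$, $\lambda$, and $\mathcal{W}([n,n+1],\cdot,\cdot)$.

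For (a), write $\omega_1=(W_1,k_1,k_2)$ and $\omega_2=(W_1^{-1},k_2,k_1)$. Unwinding the definition, $\lambda_{\omega_1}(f)$ is non-trivial on $[n,n+1]$ precisely when $\mathcal{W}([n,n+1],k_1,k_2)=W_1$ or $\mathcal{W}([n,n+1],k_2,k_1)=W_1^{-1}$, whereas $\lambda_{\omega_2}(\iota\circ f\circ\iota)$ is non-trivial there precisely when $\mathcal{W}([n,n+1],k_2,k_1)=W_1^{-1}$ or $\mathcal{W}([n,n+1],k_1,k_2)=W_1$. These are literally the same pair of clauses, so the two activation sets coincide and both maps are the identity off the same intervals. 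This is exactly where the hypotheses $W_1=W_2^{-1}$ and the transposition of $k_1,k_2$ enter.

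For (b), on an activated interval $[n,n+1]$ the restriction of $\lambda_{\omega_1}(f)$ is $\lambda(f)\restriction[n,n+1]$ and that of $\lambda_{\omega_2}(\iota\circ f\circ\iota)$ is $\lambda(\iota\circ f\circ\iota)\restriction[n,n+1]$. By the defining rule for $\lambda$, each of these is the transported copy of $f$, resp.\ $\iota\circ f\circ\iota$, on $[n,n+1]$, flipped according to whether $\rho(n+\tfrac12)=b$ or $b^{-1}$. The key point is that the matched word pins down this central letter: matching $W_1$ forces $\rho(n+\tfrac12)$ to be the central letter of $W_1$, while matching $W_1^{-1}$ forces it to be the opposite letter. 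The heart of the argument, and the only step with genuine content, is to check that the orientation reversal carried by the $\iota$-conjugation in $\iota\circ f\circ\iota$ is exactly accounted for by this sign change of the central letter, so that the two local homeomorphisms agree. I expect this orientation bookkeeping, run separately for the forward-matching and the reverse-inverse-matching intervals, to be the main (and essentially sole) obstacle; the rest is formal.

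The argument for $\pi_{\omega_1}(f)=\pi_{\omega_2}(\iota\circ f\circ\iota)$ is identical, working with the intervals $[n,n+1]$ for $n\in\tfrac12\mathbf{Z}\setminus\mathbf{Z}$, with the central letter now in $\{a,a^{-1}\}$ and $\pi$ in place of $\lambda$. Finally, the ``in particular'' clauses follow formally: conjugation by $\iota$ is an involutive automorphism of $F'$ (it preserves slopes, dyadic breakpoints, and compact support in $(0,1)$), so as $f$ ranges over $F'$ so does $\iota\circ f\circ\iota$. Hence the families $\{\lambda_{\omega_1}(f):f\in F'\}$ and $\{\lambda_{\omega_2}(f):f\in F'\}$ coincide as sets, and likewise for $\pi$; consequently one family lies in $G_{\rho}$ if and only if the other does.
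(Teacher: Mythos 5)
The paper offers no argument for this lemma beyond the remark that it is a direct consequence of the definitions, so the comparison here is really about whether your unpacking actually closes. Your step (a) is correct and is the genuine content: the two activation clauses for $\omega_1=(W_1,k_1,k_2)$ and $\omega_2=(W_1^{-1},k_2,k_1)$ are literally the same pair of conditions, so both maps can be nontrivial on exactly the same intervals. The gap is in step (b), which you explicitly defer (``I expect this orientation bookkeeping\dots to be the main obstacle'') rather than carry out --- and it is precisely the step that does not go through as you describe it. Fix an activated interval $[n,n+1]$. By the paper's definition, $\lambda_{\omega_1}(f)$ restricts there to $\lambda(f)\restriction[n,n+1]$ and $\lambda_{\omega_2}(\iota\circ f\circ\iota)$ restricts there to $\lambda(\iota\circ f\circ\iota)\restriction[n,n+1]$, and the flip in the definition of $\lambda$ is governed solely by the value of $\rho(n+\frac{1}{2})$ \emph{at that interval}, which is the same letter whichever triple you are using. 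So both restrictions receive the same flip-or-no-flip and are therefore $\iota$-conjugates of one another rather than equal; for non-symmetric $f$ they differ on every activated interval. The ``sign change of the central letter'' you invoke is a change between the abstract words $W_1$ and $W_2$, not a change in the labelling at $[n,n+1]$, so it cannot compensate for the $\iota$-conjugation sitting in the argument of $\lambda$.

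What makes the identity come out is a reading of the definition of $\lambda_{\omega}$ in which the local model depends on \emph{which clause fired}: on intervals with $\mathcal{W}([n,n+1],k_1,k_2)=W$ one places the transported copy of $f$, and on intervals with $\mathcal{W}([n,n+1],k_2,k_1)=W^{-1}$ one places the transported copy of $\iota\circ f\circ\iota$. (When the central letter of $W$ is $b$ this agrees with the truncation of $\lambda(f)$, which is why the rest of the paper is unaffected; the two readings diverge exactly in the case this lemma is designed to eliminate.) Under that reading, passing from $\omega_1$ to $\omega_2$ swaps the two clauses and replacing $f$ by $\iota\circ f\circ\iota$ swaps the two local models back, so the identity follows from your step (a) alone, with no orientation bookkeeping left to do. Note also that under the literal truncation reading $\lambda_{\omega_1}$ and $\lambda_{\omega_2}$ are the \emph{same} operator, so the two ``in particular'' statements hold for trivial reasons; your closing observation that $f\mapsto\iota\circ f\circ\iota$ is an involutive automorphism of $F'$ is correct and suffices for those conclusions either way. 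You need to either adopt the clause-dependent reading explicitly, or accept that the displayed identity is not what the stated definitions give; as written, the one step you flag as having content is the one that fails.
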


\begin{remark}
Note that $\lambda_{\omega}(f), \tau_{\omega}(f)$ for $\omega=(W,k_1,k_2)$ will be equal to the identity homeomorphism, or the trivial element of $G_{\rho}$, if $W$ does not occur as a subword of the labelling $\rho$.
If $|W|\neq k_1+k_2+1$ then these elements will also be trivial.
Finally, note that $\lambda_{\omega}(f)$ is trivial if $w_0\in \{a,a^{-1}\}$ and $\pi_{\omega}(f)$ is trivial if $w_0\in \{b,b^{-1}\}$.
\end{remark}

%The main result of this section is the following.
The key technical step in the proof of the main theorem is the following localization result.

\begin{prop}\label{SpecialElements}
Let $\omega\in \Omega$ and $f\in F'$. Then $\lambda_{\omega}(f), \pi_{\omega}(f)\in G_{\rho}$.
\end{prop}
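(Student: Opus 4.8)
The plan is to prove the statement for $\lambda_\omega(f)$; the case of $\pi_\omega(f)$ is entirely symmetric, interchanging the roles of the integer points (which govern $\mathcal{L}$ via the $a$-labels) and the half-integer points (which govern $\mathcal{K}$ via the $b$-labels). Write $\omega=(W,k_1,k_2)$ with $W=w_{-k_1}\cdots w_0\cdots w_{k_2}$. By the last Remark preceding the statement, $\lambda_\omega(f)$ is trivial unless $w_0\in\{b,b^{-1}\}$, and by Lemma \ref{SpecialElements1} we may replace $(W,k_1,k_2)$ by $(W^{-1},k_2,k_1)$ at the cost of replacing $f$ by $\iota\circ f\circ\iota\in F'$; so it is harmless to fix any convenient orientation of the word.

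First I would record two reductions that shrink the problem to a single convenient $f$. The map $f\mapsto\lambda_\omega(f)$ is a homomorphism $F'\to\textup{Homeo}^+(\mathbf{R})$, since on each interval $[n,n+1]$ it is either $\lambda(\cdot)$ or the trivial homomorphism. Moreover it is conjugation-equivariant: for $h\in F'$ one checks interval-by-interval that $\lambda_\omega(h^{-1}fh)=\lambda(h)^{-1}\,\lambda_\omega(f)\,\lambda(h)$, and $\lambda(h)\in\mathcal{K}'\le G_\rho$. Because $F'$ is simple (Lemma \ref{3gen}), it is generated by the $F'$-conjugates of any single nontrivial element. Combining the homomorphism and equivariance properties, it therefore suffices to prove $\lambda_\omega(f_0)\in G_\rho$ for one conveniently chosen nontrivial $f_0\in F'$, which by Lemma \ref{TransitiveStandard} we may take supported on an arbitrarily small standard dyadic interval $T\subset(0,1)$.

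The core is an induction on the width $|W|=k_1+k_2+1$, proving ``$\lambda_{\omega}(f)\in G_\rho$ for all $f\in F'$'' for every $\omega$ of width $\le N$. For the base case $|W|=1$ both selection clauses (match of $W$ and match of $W^{-1}$) hold for every interval, so $\lambda_\omega(f)=\lambda(f)\in\mathcal{K}'\le G_\rho$. For the inductive step I would append one letter to the current word, say on the right, passing from $\omega'=(W',k_1,k_2-1)$ to $\omega=(W,k_1,k_2)$ with $W=W'w_{k_2}$; this adds the requirement that the label at the position $p=n+\tfrac12+\tfrac12 k_2$ equals $w_{k_2}$. The detector interval is $[p-\tfrac12,p+\tfrac12]$, on which either a $\zeta_i$ (if $p$ is a half-integer, $k_2$ even) or a $\chi_i$ (if $p$ is an integer, $k_2$ odd) acts, and, crucially, acts with the opposite orientation exactly when $\rho(p)=w_{k_2}^{-1}$ rather than $w_{k_2}$. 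Taking $u=\lambda_{\omega'}(f)\in G_\rho$ by the inductive hypothesis, I would build a transport element $g\in G_\rho$ (a fixed word in the generators) carrying a reference subinterval of $T_n\subset(n,n+1)$ into the detector interval around $p$, and then form a commutator of $g^{-1}ug$ with a suitable power of the detecting generator. On intervals not selected by $\omega'$ the element $u$ is already trivial, so this construction is automatically trivial there; on the $\omega'$-selected intervals, which all carry the same local subword $W'$, the single element $g$ acts uniformly (this uniformity across occurrences is exactly the mechanism of Corollary \ref{images}), and the orientation flip at $p$ makes the commutator reproduce a copy of $\lambda(f)$ when $\rho(p)=w_{k_2}$ and cancel it when $\rho(p)=w_{k_2}^{-1}$. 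Identifying the resulting homeomorphism with $\lambda_\omega(f')$ for a possibly modified $f'\in F'$ and invoking the homomorphism/equivariance reduction once more completes the step; the symmetric left-append is handled identically, and the two matching clauses of the definition are reconciled by Lemma \ref{SpecialElements1}.

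The main obstacle is precisely this one-letter detector: arranging the supports of $g$, of $g^{-1}ug$, and of the detecting generator so that the commutator cancels \emph{exactly} on the mismatched intervals while faithfully reproducing $\lambda(f)$ on the matched ones, simultaneously for all occurrences along $\mathbf{R}$ and without leaving $\mathbf{Z}[\tfrac12]$ or breaking the power-of-$2$ slope condition. The expanding behaviour of $c_0$ near the endpoints (Definition \ref{H}) is what permits a dyadic transport across the integer or half-integer point separating $[n,n+1]$ from the detector interval, and the $\iota$-flip encoding of inverse labels is what lets a single commutator read off $\rho(p)$; making these two features cooperate cleanly, and verifying that the outcome is the prescribed $\lambda_\omega(f')$, is the delicate heart of the argument.
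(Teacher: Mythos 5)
Your two reductions are sound, and the first is even a little slicker than the paper's: since $f\mapsto\lambda_{\omega}(f)$ is a homomorphism intertwining conjugation by $F'$ with conjugation by $\lambda(F')\le G_{\rho}$, the set of $f$ with $\lambda_{\omega}(f)\in G_{\rho}$ is a normal subgroup of the simple group $F'$, so one nontrivial instance suffices. (The paper instead uses perfectness of $F'$ to reduce to the case $f=[f_3,f_4]$ --- and that choice is not cosmetic, see below.) The base case is fine. But the inductive step, which is the entire content of the proposition, is not actually carried out: you describe a transport element $g$, a commutator with ``a suitable power of the detecting generator,'' declare the arrangement of supports to be the delicate heart of the argument, and stop. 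That is a genuine gap, and the mechanism you propose has a concrete problem you do not address: a fixed word $g$ in the generators does \emph{not} act uniformly on neighborhoods of all the $\omega'$-selected intervals. The restriction of $g$ near $[n,n+1]$ depends on the labels in a window of radius comparable to the length of $g$ (this is exactly the content of Corollary \ref{images}), whereas $W'$ only prescribes the labels in a window of radius about $\frac{1}{2}\sup\{k_1,k_2\}$; so $T_n\cdot g$ can land in different places for different selected $n$, and the resulting commutator need not be of the form $\lambda_{\omega}(f')$ for any $f'$. You would also still have to convert $v^{-1}d^{-1}vd$ on the matched intervals into a uniform, nontrivial local copy of some $\lambda(f')$, which is a second unverified step.

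The paper's inductive step avoids transport entirely, and this is the idea your sketch is missing. It inducts on $\sup\{k_1,k_2\}$ rather than on $|W|$: after conjugating $f$ to a commutator $f_2=[f_3,f_4]$ with $f_3$ supported in $(\frac{1}{2},1)$ and $f_4'=hf_4h^{-1}$ supported in $(0,\frac{1}{2})$, one splits $W$ at its central letter into $W_1$ (ending at $w_0$) and $W_2$ (starting at $w_1$) and verifies the single identity $\lambda_{\omega}(f_2)=[\lambda_{\omega_1}(f_3),\pi_{\omega_2}(f_4'')]$ (with a small extra conjugation in the case $k_1=k_2$). Both special elements lie in $G_{\rho}$ by induction, each is already confined to the correct half of each interval it selects, and their supports overlap exactly on the intervals where the two halves of $W$ occur in the correct relative position and orientation --- i.e.\ the $\omega$-selected intervals, where the commutator reproduces $f_2$ in local coordinates --- and are disjoint everywhere else, where it vanishes. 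The detection is done by overlap of supports inside a single unit interval, not by reading a far-away letter, which is precisely why no uniform transport is needed. To salvage your letter-by-letter induction you would have to build the transport from inductively available special elements (which \emph{are} uniform on selected intervals) rather than an arbitrary $g\in G_{\rho}$, at which point you would essentially have reconstructed the paper's argument.
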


\begin{proof}
We show this for $\lambda_{\omega}$, the proof for $\tau_{\omega}$ is similar.
Thanks to Lemma \ref{SpecialElements1}, we can assume without loss of generality that $\omega=(W,k_1,k_2)$
satisfies that the central letter of $W$ equals $b$.
As an appetizer, we first demonstrate the above proposition for $k_1,k_2\in \{0,1\}$.
The statement in its full generality will then follow using an induction on $n$ which is essentially similar to the base case.

{\bf The case $k_1=k_2=0$}. 

If $W=b$ then $\lambda_{\omega}(f)=\lambda(f)$.\\

{\bf The case $k_1=0,k_2=1$ or $k_1=1,k_2=0$}

Consider the case $W=ba$.
Given an $f\in F'$, we wish to show that $\lambda_{\omega}(f)\in G_{\rho}$.
Since $F'$ is generated by commutators, it suffices to show this in the case when $f$ is a commutator.
Since $f\in F'$, there is an $f_1\in F'$ such that $$Supp(f_1 f f_1^{-1})\subset (\frac{1}{2},1)$$
Let $f_2=f_1 f f_1^{-1}$. By self similarity of $F'$, we note that $f_2$ is a commutator in $F_{(\frac{1}{2},1)}'$. 

Let $$f_2=[f_3,f_4]\qquad \text{ for }f_3,f_4\in F_{[\frac{1}{2},1]}'\subset F_{[0,1]}'$$
Let $f_4'\in F_{[0,\frac{1}{2}]}'\subseteq F_{[0,1]}'$ such that $f_4'=h f_4 h^{-1}$ where $h(t)=t+\frac{1}{2}$. 
We claim that 
$$\lambda_{\omega}(f_2)=[\lambda(f_3),\pi(f_4')]$$
Consider an interval $[n,n+1]$ where $n\in \mathbf{Z}$.
If either $$\rho(n+\frac{1}{2})\rho(n+1)=ba$$ or $$\rho(n)\rho(n+\frac{1}{2})=a^{-1}b^{-1}$$ 
then 
$$[\lambda(f_3),\pi(f_4')]\restriction [n,n+1]=\lambda_{\omega} (f_2) \restriction [n,n+1]$$
If $\rho(n)\rho(n+\frac{1}{2})\rho(n+1)\in \{ab^{-1}a,ab^{-1}a^{-1},a^{-1}ba^{-1}\}$ then $$(Supp(\lambda(f_3))\cap [n,n+1])\bigcap (Supp (\pi(f_4'))\cap [n,n+1])=\emptyset$$
and hence 
$$[\lambda(f_3),\pi(f_4')]\restriction [n,n+1] = id\restriction [n,n+1] = \lambda_{\omega}(f_2)\restriction [n,n+1]$$
Since the central letter of $W$ is $b$, we obtain $$\lambda_{\omega}(f)=\lambda(f_1^{-1})\lambda_{\omega}(f_2)\lambda(f_1)\in G_{\rho}$$

The cases $W\in \{a^{-1}b,ab,ba^{-1}\}$ are very similar and are left as a pleasant visual exercise for the reader.

{\bf The general case}\\
We perform an induction on $sup\{k_1,k_2\}$.
Let the inductive hypothesis hold for $n\in \mathbf{N}$.
Consider a word $$W=w_{-k_1}...w_0...w_{k_2}\qquad w_i\in \{a,a^{-1},b,b^{-1}\}$$
such that $sup\{k_1,k_2\}=n+1$.
There are three cases: 
\begin{enumerate}
\item $k_2>k_1$.
\item $k_1>k_2$.
\item $k_1=k_2$.
\end{enumerate}

The first two cases are symmetric, and we deal with $k_2>k_1$ and $k_1=k_2$.\\

{\bf The case $k_2>k_1$}

Assume as above that $w_0=b$.
We wish to show that given an $f\in F'$, $\lambda_{\omega}\in G_{\rho}$.
Since $F'$ is generated by commutators, as above it suffices to show this in the case when $f$ is a commutator.

Since $f\in F'$, there is an $f_1\in F'$ such that $$Supp(f_1 f f_1^{-1})\subset (\frac{1}{2},1)$$
Let $$f_2=f_1 f f_1^{-1}$$ As before, by self similarity of $F'$, we note that $f_2$ is a commutator in $F_{(\frac{1}{2},1)}'$. 

Let $$f_2=[f_3,f_4]\qquad f_3,f_4\in F_{[\frac{1}{2},1]}'\subset F_{[0,1]}'$$
Let $f_4'\in F_{[0,\frac{1}{2}]}'\subseteq F_{[0,1]}'$ such that $f_4'=h f_4 h^{-1}$ where $h(t)=t+\frac{1}{2}$. 

We define $$W_1=w_{-k_1}...w_{-1}w_0\qquad W_2=w_1...w_{k_2}$$
$$l_1=k_1, l_2=0, l_3=0,l_4=k_2-1\qquad \omega_1=(W_1,l_1,l_2)\qquad \omega_2=(W_2,l_3,l_4)$$

Note that the central letter of $W_1$ is $b$ and the central letter of $W_2$ is $w_1\in \{a,a^{-1}\}$.
From our inductive hypothesis, we know that $\lambda_{\omega_1}(h), \lambda_{\omega_2}(h) \in G_{\rho}$ for each $h\in F'$.
One checks that:

\begin{enumerate}
\item If $w_1=a$ then $$\lambda_{\omega}(f_2)=[\lambda_{\omega_1}(f_3),\pi_{\omega_2}(f_4')]$$
\item If $w_1=a^{-1}$ then $$\lambda_{\omega}(f_2)=[\lambda_{\omega_1}(f_3),\pi_{\omega_2}(f_4'')]\qquad f_4''=\iota\circ f_4'\circ \iota$$
\end{enumerate}
Since $w_0=b$, it follows that $$\lambda_{\omega}(f)=\lambda(f_1^{-1})\lambda_{\omega}(f_2) \lambda(f_1)\in G_{\rho}$$

{\bf The case $k_1=k_2$}

Assume as above that $w_0=b$.
We wish to show that given an $f\in F'$, $\lambda_{\omega}\in G_{\rho}$.
Once again, as above it suffices to show this in the case when $f$ is a commutator.

Just as above we fix an $f_1\in F'$ such that $$Supp(f_1 f f_1^{-1})\subset (\frac{1}{2},1)$$
And let $$f_2=f_1 f f_1^{-1}$$ As before, by self similarity of $F'$, we note that $f_2$ is a commutator in $F_{(\frac{1}{2},1)}'$. 

Let $$f_2=[f_3,f_4]\qquad f_3,f_4\in F_{[\frac{1}{2},1]}'\subset F_{[0,1]}'$$
Let $f_4'\in F_{[0,\frac{1}{2}]}'\subseteq F_{[0,1]}'$ such that $f_4'=h f_4 h^{-1}$ where $h(t)=t+\frac{1}{2}$.
Let $f_5\in F'$ be an element such that $$f_5\restriction Supp(f_3)=h^{-1}(t)\restriction Supp(f_3)$$

Let $$W_1=w_{-k_1}...w_0\qquad W_2=w_1...w_{k_2}$$
and $$l_1=k_1-1,l_2=1,l_3=0,l_4=k_2-1\qquad \omega_1=(W_1,l_1,l_2)\qquad \omega_2=(W_2,l_3,l_4)$$
Note that $w_{-1},w_1$ are the central letters of $W_1,W_2$ respectively.

Let $$f_3''=\iota\circ f_3'\circ \iota \qquad \text{ if }w_{-1}=a^{-1}$$
and $$f_3''=f_3' \qquad \text{ if }w_{-1}=a$$
Let $$f_4''=\iota\circ f_4'\circ \iota\qquad \text{ if }w_{1}=a^{-1}$$ and $$f_4''=f_4'\qquad \text{ if }w_{1}=a$$

From our inductive hypothesis, we know that $\lambda_{\omega_1}(k), \lambda_{\omega_2}(k)\in G_{\rho}$ for each $k\in F'$.
One checks that $$\lambda_{\omega}(f_2)=[\lambda(f_5^{-1})\pi_{\omega_1}(f_3'')\lambda(f_5),\pi_{\omega_2}(f_4'')]$$
Since $w_0=b$, it follows that $$\lambda_{\omega}(f)=\lambda(f_1^{-1})\lambda_{\omega}(f_2) \lambda(f_1)\in G_{\rho}$$
\end{proof}

\section{The epilogue}

The goal of this section is to prove Proposition  \ref{uniformlystable},
and subsequently the 
%main Theorems and Corollaries 
results stated in the Introduction.
We consider a uniformly stable element $f\in K_{\rho}$.
Let $\{I_{\alpha}\}_{\alpha\in P}$ be the set of atoms of $f$.
From Definition \ref{Krho}, we know that there is a $k_f\in \mathbf{N}$ such that parts $(3.a),(3.b)$ of the Definition hold.

\begin{lem}\label{Nbhd1}
Let $f\in K_{\rho}$ and $\{I_{\alpha}\}_{\alpha\in P}$ be as above.
There is a number $l_f>k_f$ such that the following holds.
Consider $n,m \in \mathbf{Z}, \alpha\in P$ such that $[n,n+1],[m,m+1]$ are respectively the head and the foot of $I_{\alpha}$.
Assume that $n\neq m$ (and hence $I_{\alpha}$ has a distinct head and foot.)
Then it follows that $$\mathcal{W}([n,n+1], l_f)\neq \mathcal{W}([m,m+1],l_f)$$
$$\mathcal{W}([n,n+1], l_f)\neq \mathcal{W}^{-1}([m,m+1],l_f)$$
\end{lem}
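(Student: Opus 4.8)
The plan is to argue by contradiction for a single atom and then to make the threshold uniform using uniform stability. Write $p=m_2-m_1$ for the length of the atom $I_\alpha=[m_1,m_2]$, so that the hypothesis $n\neq m$ (distinct head and foot) means exactly $p\geq 2$, and hence $m_1+1$ is an interior integer of the atom, $m_1+1\in(m_1,m_2)\cap\mathbf{Z}$. By the atom axioms $f$ is the identity on a two-sided neighborhood of each of $m_1,m_2$, while it is nontrivial arbitrarily close to $m_1+1$. The quantity $x-x\cdot f$ is precisely what conditions $3.\mathrm a$ and $3.\mathrm b$ of Definition \ref{Krho} control, and I would show that either word-coincidence would force $x-x\cdot f=0$ on a full neighborhood of $m_1+1$, contradicting the atom property.

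First I would dispose of the flip-coincidence $\mathcal{W}([n,n+1],l)=\mathcal{W}^{-1}([m,m+1],l)$. Spelling out the two words, the flip relation matches the letter of $\rho$ at a position $u$ in the head word (centered at $m_1+\tfrac12$) with the inverse of the letter at the reflected position $2c-u$ in the foot word (centered at $m_2-\tfrac12$), where $c=\tfrac{m_1+m_2}{2}$ is the midpoint of the atom. A direct index count shows that as soon as $l\geq p-1$ the self-matched position $u=c$ lies in the common range of both words; since $m_1+m_2\in\mathbf{Z}$ we have $c\in\tfrac12\mathbf{Z}$, so this forces $\rho(c)=\rho(c)^{-1}$, which is impossible because no letter of $\{a,b,a^{-1},b^{-1}\}$ is its own inverse. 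Hence the flip-equality fails for every $l\geq p-1$.

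Next I would rule out the straight coincidence $\mathcal{W}([n,n+1],l)=\mathcal{W}([m,m+1],l)$. This equality says exactly that $\rho(v)=\rho(v+(p-1))$ for all $v$ in the range $[m_1+\tfrac12-\tfrac{l}{2},\,m_1+\tfrac12+\tfrac{l}{2}]$ of the head word, i.e. $\rho$ is locally periodic with period $p-1$ over a window of radius $\tfrac{l}{2}$ about $m_1+\tfrac12$. The key point is that, provided $l\geq k_f+2$, for every point $x$ in a small two-sided neighborhood of the boundary integer $m_2$ the $k_f$-word of $x$ agrees with the $k_f$-word of its translate $x-(p-1)$ near $m_1+1$, because both of these windows stay inside the locally periodic range (the right-hand side $x\geq m_2$ is exactly what forces the margin $k_f+2$ rather than $k_f$). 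Condition $3.\mathrm a$ then transfers the relation $x-x\cdot f=0$, which holds on a full neighborhood of $m_2$ since $m_2$ is an endpoint of the atom, to a full neighborhood of $m_1+1=m_2-(p-1)$. This makes $f$ the identity near the interior integer $m_1+1$, contradicting the atom property, so the straight equality also fails once $l\geq k_f+2$.

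Finally I would make the bound uniform over all atoms. Uniform stability provides finitely many equivalence classes of decorated atoms; since conjugate and flip-conjugate atoms have equal length, only finitely many lengths occur, so there is an $L$ with $p=m_2-m_1\leq L$ for every atom. Setting $l_f=\max\{k_f+2,\,L\}$, which exceeds $k_f$, simultaneously gives $l_f\geq p-1$ and $l_f\geq k_f+2$ for every atom, so both displayed inequalities hold at width $l_f$. I expect the straight-coincidence case to be the main obstacle: unlike the flip case it has no self-matched position, so the contradiction genuinely requires combining the local periodicity extracted from the word-coincidence with condition $3.\mathrm a$ and with the two-sidedness of the fixed neighborhood at the boundary $m_2$, and getting the window bookkeeping right — hence the precise threshold $l\geq k_f+2$ — is the delicate point.
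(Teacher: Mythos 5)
Your proof is correct and follows essentially the same route as the paper's: the flip-coincidence is killed by the self-matched position forcing $\rho(t)=\rho(t)^{-1}$ at the midpoint of the atom, and the straight coincidence is killed by using condition $3.\mathrm{a}$ to transport the trivial two-sided fixed neighborhood at an atom endpoint to an interior integer, contradicting the atom axiom (the paper translates from the head $m_1$ to $m_2-1$ rather than from $m_2$ to $m_1+1$, and takes $l_f=k_f+\sup_\beta|I_\beta|$, but these are cosmetic differences). Your explicit bookkeeping of the $k_f+2$ margin and your derivation of the bound on atom lengths from uniform stability are both sound.
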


\begin{proof}
First we claim that $$\mathcal{W}([n,n+1],k_f+2)\neq \mathcal{W}([m,m+1],k_f+2)$$
From the definition of the atoms of $f$, there is an $\epsilon>0$ such that $f$ fixes each point in $[n-\epsilon, n+\epsilon]$.
However, there is a point in $[m-\epsilon, m+\epsilon]$ that is moved by $f$.
It follows from Definition \ref{Krho} that either $$\mathcal{W}(n-\frac{1}{2},k_f)\neq \mathcal{W}(m-\frac{1}{2},k_f)$$
or $$\mathcal{W}(n+\frac{1}{2},k_f)\neq \mathcal{W}(m+\frac{1}{2},k_f)$$
Therefore, the claim follows.

Let $l=sup\{|I_{\beta}|\mid \beta\in P\}$.
Note that from Definition \ref{Krho} it follows that $l$ is finite.
For $l_f=k_f+l$ it follows that 
$$\mathcal{W}([n,n+1],l_f)\neq \mathcal{W}^{-1}([m,m+1],l_f)$$
(To see this, assume by way of contradiction that the equality holds.
This would imply that there is a number $t\in [n,m]$ such that $\rho(t)=\rho(t)^{-1}$, which is impossible.)
It follows that both inequalities hold for $l_f=k_f+l$.
\end{proof}

\begin{defn}\label{lf}
Given any $f\in K_{\rho}$ that is uniformly stable, we define the number emerging from the proof of the above Lemma as $$l_f=k_f+l\qquad l=sup\{|I_{\beta}|\mid \beta\in P\}$$
Note that $l_f$ satisfies both the conditions of Definition \ref{Krho}
and the conclusion of Lemma \ref{Nbhd1}.
\end{defn}

Since $f$ is uniformly stable, we can consider the cellular decomposition of $f$ as decorated atoms $\mathcal{T}_{l_f}(f)$.
Let $\zeta_1,...,\zeta_m$ be the equivalence classes of $\mathcal{T}_{l_f}(g)$.
The list of homeomorphisms $f_{\zeta_1},...,f_{\zeta_m}$ form the resulting cellular decomposition.
To prove Proposition \ref{uniformlystable} we would like to show that $f_{\zeta_1},...,f_{\zeta_m}\in G_{\rho}$.

\begin{lem}\label{Nbhd2}
Let $f\in K_{\rho}$, $\{I_{\alpha}\}_{\alpha\in P}$ and $\mathcal{T}_{l_f}(f)$ be as above.
Consider $n,m \in \mathbf{Z}, \alpha\in P$ such that:
\begin{enumerate}
\item $[n,n+1],[m,m+1]$ are subintervals of $I_{\alpha}$.
\item $[n,n+1]$ is either the head or the foot of $I_{\alpha}$ and $[m,m+1]$ is neither the head nor the foot of $I_{\alpha}$.
\end{enumerate}
Then it follows that $$\mathcal{W}([n,n+1], l_f)\neq \mathcal{W}([m,m+1],l_f)$$
$$\mathcal{W}([n,n+1], l_f)\neq \mathcal{W}^{-1}([m,m+1],l_f)$$
\end{lem}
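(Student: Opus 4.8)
The plan is to follow the proof of Lemma \ref{Nbhd1} almost verbatim, replacing the head-versus-foot dichotomy used there by a head-or-foot versus interior dichotomy. I would begin by unwinding the hypotheses geometrically. Since $[m,m+1]$ is a subinterval of $I_\alpha$ that is neither its head nor its foot, one gets $\inf(I_\alpha) < m$ and $m+1 < \sup(I_\alpha)$; in particular $|I_\alpha| \geq 3$, and both $m$ and $m+1$ are interior integers of the atom, so by condition $3$ in the definition of an atom $f$ moves points arbitrarily close to each of them. On the other hand, since $|I_\alpha|\geq 3$, exactly one integer endpoint of $[n,n+1]$ is an endpoint of $I_\alpha$: if $[n,n+1]$ is the head then $n=\inf(I_\alpha)$ and $f$ fixes a two-sided neighborhood of $n$, while if $[n,n+1]$ is the foot then $n+1=\sup(I_\alpha)$ and $f$ fixes a two-sided neighborhood of $n+1$ (condition $2$ in the definition of an atom). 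Write $k$ for this fixed endpoint ($k=n$ or $k=n+1$) and $k'$ for the corresponding endpoint of $[m,m+1]$ ($k'=m$ or $k'=m+1$); by the above $k'$ is an interior integer of $I_\alpha$.

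Next I would prove the first inequality $\mathcal{W}([n,n+1], l_f) \neq \mathcal{W}([m,m+1], l_f)$, and for this it suffices, as in Lemma \ref{Nbhd1}, to rule out equality already at radius $k_f+2$. Suppose $\mathcal{W}([n,n+1], k_f+2) = \mathcal{W}([m,m+1], k_f+2)$. These radius-$(k_f+2)$ words are the point-words centered at $n+\tfrac12$ and $m+\tfrac12$, and a direct index check shows each determines the radius-$k_f$ sub-words at all three centers $n-\tfrac12, n+\tfrac12, n+\tfrac32$ (respectively $m-\tfrac12, m+\tfrac12, m+\tfrac32$), which covers both $k=n$ and $k=n+1$ at once. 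Restricting the assumed equality to the sub-words centered at $k\pm\tfrac12$ forces $\mathcal{W}(k-\tfrac12, k_f) = \mathcal{W}(k'-\tfrac12, k_f)$ and $\mathcal{W}(k+\tfrac12, k_f) = \mathcal{W}(k'+\tfrac12, k_f)$. Now I apply part $3.\mathrm{a}$ of Definition \ref{Krho}: for $x$ ranging over the fixed neighborhood of $k$ on either side and its integer translate $y=x-(k-k')$, the matching words give $x-x\cdot f = y - y\cdot f$, whence $y$ is fixed. Thus $f$ fixes a punctured neighborhood of $k'$, hence by continuity a full neighborhood, contradicting that $k'$ is an interior integer of the atom. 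This proves the radius-$(k_f+2)$ words differ; and since $l_f=k_f+l\geq k_f+2$ (as $l\geq |I_\alpha|\geq 3$) and the radius-$l_f$ words contain the radius-$(k_f+2)$ words as central sub-words, the radius-$l_f$ words differ as well.

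For the inverted inequality $\mathcal{W}([n,n+1], l_f) \neq \mathcal{W}^{-1}([m,m+1], l_f)$ I would reuse the symmetric-point argument of Lemma \ref{Nbhd1} without change. Writing both sides as point-words centered at $n+\tfrac12$ and $m+\tfrac12$, an equality $\mathcal{W}(n+\tfrac12, l_f) = \mathcal{W}^{-1}(m+\tfrac12, l_f)$ says precisely that $\rho(p) = \rho\big((n+m+1)-p\big)^{-1}$ for every half-integer $p$ in the radius-$l_f$ window about $n+\tfrac12$. Because $[n,n+1]$ and $[m,m+1]$ both lie in $I_\alpha$, the centers satisfy $|n-m| \leq |I_\alpha|-1 \leq l-1 < l_f$, so the self-paired position $t=\tfrac{n+m+1}{2}$ lies in this window and yields $\rho(t)=\rho(t)^{-1}$, which is impossible since $\rho$ takes values in $\{a,b,a^{-1},b^{-1}\}$. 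This contradiction establishes the second inequality.

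The only delicate part is the index bookkeeping in the second paragraph: verifying that a radius-$(k_f+2)$ word really determines the radius-$k_f$ sub-words at all three centers $n-\tfrac12, n+\tfrac12, n+\tfrac32$ (so that the head case $k=n$ and the foot case $k=n+1$ are handled simultaneously), and that the translate $y=x-(k-k')$ lands in the corresponding neighborhood of $k'$. The two quantitative inputs, $l_f\geq k_f+2$ and $|n-m|\leq l-1 < l_f$, are identical to those already verified in the proof of Lemma \ref{Nbhd1}, so no new estimate is needed; the entire content lies in separating a fixed atom-endpoint from a moved interior integer.
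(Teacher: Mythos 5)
Your proof is correct and follows essentially the same approach as the paper: you separate the atom endpoint contained in $[n,n+1]$ (where $f$ fixes a two-sided neighborhood, by condition $2$ of the atom definition) from the interior integer of $[m,m+1]$ (near which $f$ moves points, by condition $3$), and apply part $3.\mathrm{a}$ of Definition \ref{Krho} to rule out equality of the words; the paper's own proof is a terse version of exactly this. The only variation is that for the inverted inequality you reuse the self-paired-letter (midpoint) argument from Lemma \ref{Nbhd1} rather than invoking part $3.\mathrm{b}$, which is equally valid since both intervals lie in the same atom and hence $|n-m|<l_f$.
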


\begin{proof}
Assume that $[n,n+1]$ is the head of $I_{\alpha}$.
(The proof for the foot is similar.)
Note that by definition, $f\restriction (n-\epsilon, n+\epsilon)=id$ for some $\epsilon>0$.
However, $f\restriction (m-\epsilon, m+\epsilon)\neq id$.
So from part $3$ of Definition \ref{Krho} our conclusion follows.
\end{proof}

\begin{defn}
An element $g\in G_{\rho}$ is said to \emph{preserve the atoms} of $f$ if the following holds:
\begin{enumerate}
\item For each $\alpha\in P$, $g$ pointwise fixes a neighborhood of $inf(I_{\alpha}), sup(I_{\alpha})$.
\item If $(I_{\alpha},l_{f})$ and $(I_{\beta},l_f)$ are equivalent, then $$g\restriction I_{\alpha}\cong_T g\restriction I_{\beta}\qquad \text{ if }\mathcal{W}(I_{\alpha},l_f)=\mathcal{W}(I_{\beta},l_f)$$ 
 $$g\restriction I_{\alpha}\cong_T \iota_{\beta}\circ g\circ \iota_{\beta} \restriction I_{\beta}\qquad \text{ if }\mathcal{W}(I_{\alpha},l_f)=\mathcal{W}^{-1}(I_{\beta},l_f)$$ 
 where $\iota_{\beta}:I_{\beta}\to I_{\beta}$ is the unique orientation reversing isometry. 
\end{enumerate}
Note that these properties are closed under composition of elements, and hence we define a subgroup of $G_{\rho}$ 
$$\mathcal{M}_f = \{ g\in G_{\rho}\mid g \text{ is atom preserving for }f\}$$
\end{defn}

Special elements in $G_{\rho}$ provide a natural source of atom preserving elements, as is observed in the proof of the Lemma below.
 
 \begin{lem}\label{atompreserving}
 The restriction $\mathcal{M}_f\restriction int(I_{\alpha})$ for each $\alpha\in P$ does not admit a global fixed point.
 \end{lem}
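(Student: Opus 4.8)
The plan is to show that for each atom $I_\alpha$, the group $\M_f$ acts on $int(I_\alpha)$ without a global fixed point, by exhibiting enough atom-preserving elements to move any interior point. The key tool will be the special elements $\lambda_\omega(k)$ and $\pi_\omega(k)$ of Proposition \ref{SpecialElements}, which are precisely the elements of $G_\rho$ that act on an integer interval $[n,n+1]$ exactly according to the decoration $\mathcal{W}([n,n+1],k_1,k_2)$. The point is that such a special element, when its window is tuned to $l_f$, acts in a way that depends only on the equivalence class of the decorated atom, so it automatically satisfies the compatibility condition (2) in the definition of $\M_f$.

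First I would fix an atom $I_\alpha=[m_1,m_2]$ and an interior point $x\in int(I_\alpha)$. Choose the unique integer interval $[n,n+1]\subseteq I_\alpha$ containing $x$. I want to produce $g\in\M_f$ with $x\cdot g\neq x$ while $g$ fixes a neighborhood of every $inf(I_\beta),sup(I_\beta)$. The natural candidate is a special element supported on all integer intervals carrying the decoration $W=\mathcal{W}([n,n+1],l_f)$, using the nontrivial homeomorphism $k\in F'$ whose support contains the local copy of $x$. Concretely, I would take $\omega=(W,l_f,l_f)$ and set $g=\lambda_\omega(k)$ or $\pi_\omega(k)$ according to whether the central letter of $W$ lies in $\{b,b^{-1}\}$ or $\{a,a^{-1}\}$; by Proposition \ref{SpecialElements} this lies in $G_\rho$. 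By construction $g$ acts nontrivially on $[n,n+1]$ (hence can be arranged to move $x$), and it acts identically on every integer subinterval sharing the decoration $W$.

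The crucial verification is that this $g$ actually lies in $\M_f$. Condition (1), fixing neighborhoods of the endpoints $inf(I_\beta),sup(I_\beta)$, is exactly where Lemma \ref{Nbhd1} and Lemma \ref{Nbhd2} are used: those lemmas guarantee that the head and foot intervals of every atom carry decorations (at radius $l_f$) that are distinct from $W$ and from $W^{-1}$, so the special element restricts to the identity near all integer endpoints of atoms. Similarly, Lemma \ref{Nbhd2} ensures $[n,n+1]$ is neither a head nor a foot, so $x$ sits in the genuine interior and the perturbation does not leak to a boundary interval. Condition (2), the conjugacy compatibility across equivalent decorated atoms, follows because $g$ was built to act identically (up to the orientation-reversing $\iota$ in the flip-conjugate case) on all integer intervals with the same decoration, which is precisely how the equivalence relation on decorated atoms was defined, together with Lemma \ref{SpecialElements1} handling the flip case.

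The main obstacle I anticipate is the boundary bookkeeping: one must be certain that the decoration $W$ of the chosen interior interval $[n,n+1]$ cannot coincide with the decoration of any head or foot of any atom (otherwise the special element would fail condition (1) by perturbing a forbidden boundary interval). This is exactly the content secured by Lemmas \ref{Nbhd1} and \ref{Nbhd2}, which is why $l_f$ was defined large enough in Definition \ref{lf} to separate these decorations both directly and under formal inversion. Once that separation is in hand, the argument is a direct assembly: given any interior $x$, a single suitably supported special element moves $x$ while respecting all atom-preserving constraints, so no point of $int(I_\alpha)$ can be globally fixed by $\M_f$.
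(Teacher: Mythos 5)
Your overall strategy is the same as the paper's: move an arbitrary interior point by a special element whose window is tuned to the decoration at radius $l_f$, and use Lemmas \ref{Nbhd1} and \ref{Nbhd2} to see that such an element is atom preserving. However, there is a genuine gap in your construction: you only ever produce $\lambda$-type special elements. Since you always base the special element on the \emph{integer} interval $[n,n+1]$ containing $x$, the central letter of $W=\mathcal{W}([n,n+1],l_f)$ is $\rho(n+\frac{1}{2})\in\{b,b^{-1}\}$, so your ``or $\pi_{\omega}(k)$'' branch never fires. Every element of the form $\lambda_{\omega}(k)$ with $k\in F'$ restricts on each integer interval to a copy of $k$ (or its flip) compactly supported in the \emph{open} interval, hence fixes a neighbourhood of every integer. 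Consequently your construction cannot move any point of $\mathbf{Z}\cap int(I_{\alpha})$, and whenever $|I_{\alpha}|\geq 2$ these integer points remain global fixed points of the subgroup you exhibit --- exactly the conclusion you are trying to rule out. (Note also that ``the unique integer interval containing $x$'' is not unique when $x$ is an integer, which is a symptom of the same problem.) This is not a cosmetic issue: in the proof of Proposition \ref{uniformlystable} the lemma is used to push the set $J_{\alpha}$ into the head or foot of $I_{\alpha}$, which requires moving points across the interior integers of the atom.

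The paper's proof repairs this by using \emph{both} families of special elements: for points of $int(I_{\alpha})$ lying in $(n-\frac{1}{2}+\epsilon,\, n+\frac{1}{2}-\epsilon)$ for an integer $n$ in the interior of the atom, it takes $\pi_{\omega_1}(g)$ with $\omega_1=(\mathcal{W}([n-\frac{1}{2},n+\frac{1}{2}],l_f),l_f,l_f)$, whose support on $[n-\frac{1}{2},n+\frac{1}{2}]$ contains the integer $n$ in its interior; the $\lambda$-type element is reserved for the remaining (half-integer) points. Lemmas \ref{Nbhd1} and \ref{Nbhd2} are then genuinely needed for the $\pi$-type elements, since those could a priori disturb neighbourhoods of atom endpoints; for the $\lambda$-type elements condition (1) of atom preservation is automatic. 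A smaller inaccuracy in your write-up: Lemma \ref{Nbhd2} does not (and need not) ensure that $[n,n+1]$ is neither the head nor the foot of $I_{\alpha}$; the interval containing $x$ may perfectly well be the head or foot, and the argument still goes through.
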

 
 \begin{proof}
 Let $x\in int(I_{\alpha})$.
 We would like to show the existence of an element $g\in \mathcal{M}_f$ such that $x\cdot g\neq x$.
 Let $n_1=inf(I_{\alpha}), n_2=sup(I_{\alpha})$.
 There are two cases:
 \begin{enumerate}
 \item $x\in (\frac{1}{2}\mathbf{Z}\setminus \mathbf{Z})\cap int(I_{\alpha})$.
 \item $x\in (n-\frac{1}{2}+\epsilon,n+\frac{1}{2}-\epsilon)$ for $n\in \mathbf{Z}\cap int(I_{\alpha})$ and $\epsilon>0$. 
 \end{enumerate}
 Let $g\in F'$ be an element such that $(\epsilon, 1-\epsilon)\subset Supp(g)$.
 In the latter case, from an application of Lemmas \ref{Nbhd1} and \ref{Nbhd2}, it is easy to see that the special element $\pi_{\omega_1}(g)$ for 
 $$\omega_1=(\mathcal{W}([n-\frac{1}{2},n+\frac{1}{2}],l_f), l_f, l_f)$$ 
 is atom preserving. Moreover, $x\cdot \pi_{\omega_1}(g)\neq x$, since $(\epsilon, 1-\epsilon)\subset Supp(g)$.
 
 In the former case, the special element is $\lambda_{\omega_2}(g)$ for $$\omega_2=(\mathcal{W}(x,l_f),l_f,l_f)$$ 
 is atom preserving, and $x\cdot \lambda_{\omega_2}(g)\neq x$ since $\frac{1}{2}\in (\epsilon, 1-\epsilon)\subset Supp(g)$.
 \end{proof}
 
 \begin{proof}[Proof of Proposition \ref{uniformlystable}]
 Let $f_{\zeta_{j}}$ be an element in the cellular decomposition of $f$.
 We would like to show that $f_{\zeta_j}\in G_{\rho}$.
 For each $\alpha\in P$, let $$J_{\alpha}=Supp(f_{\zeta_j})\cap I_{\alpha}$$ 
 From an application of Lemma \ref{atompreserving}, we find an element $g\in \mathcal{M}_f<G_{\rho}$ such that 
 for each $\alpha\in P$ such that $J_{\alpha}\neq \emptyset$ one of the following holds:
 \begin{enumerate}
 \item $J_{\alpha}\cdot g$ is a subset of the head of $I_{\alpha}$.
 \item  $J_{\alpha}\cdot g$ is a subset of the foot of $I_{\alpha}$.
 \end{enumerate}
 Indeed, if $\alpha,\beta\in P$ are such that $\mathcal{W}^{-1}(I_{\alpha}, l_f)=\mathcal{W}(I_{\beta},l_f)$, then 
 $J_{\alpha}\cdot g$ being a subset of the head of $I_{\alpha}$ implies that $J_{\beta}\cdot g$ is a subset of the foot of $I_{\beta}$.
%(Let $\alpha,\beta\in P$ such that $J_{\alpha}\cdot g$ is a subset of the head of $I_{\alpha}$.
 %and $J_{\beta}\cdot g$ is a subset of the tail of $I_{\beta}$. Then note that $\mathcal{W}^{-1}(I_{\alpha}, l_f)=\mathcal{W}(I_{\beta},l_f)$.)
 
 It follows from an application of Lemmas \ref{Nbhd1},\ref{Nbhd2} that $$g^{-1}f_{\zeta_j} g=\lambda_{\omega}(h)$$
 where $h\in F'$ and $$\omega=(\mathcal{W}(I_{\alpha}, l_f), l_f,l_f)$$
 for some (or any) $I_{\alpha}$ such that $J_{\alpha}\cdot g$ is a subset of the head of $I_{\alpha}$.
In particular, $$f_{\zeta_j}=g \lambda_{\omega}(h) g^{-1}$$
 Since by Proposition \ref{SpecialElements} $\lambda_{\omega}(h)\in G_{\rho}$, we conclude that $f_{\zeta_i}\in G_{\rho}$.
 \end{proof}
 
We can now finish the proof of Theorems \ref{characterisation}, \ref{main} and Corollaries \ref{mainprop2}, \ref{maincorollary}.

\begin{proof}[Proof of Theorem \ref{characterisation}]
We know that $G_{\rho}\leq K_{\rho}$. 
It remains to show that given $g\in K_{\rho}$, one has $g\in G_{\rho}$.
Using Lemma \ref{stabilisation}, we know that there exist $g_1,g_2\in G_{\rho}$ such that $h=g_1^{-1} (g g_2^{-1}) g_1\in K_{\rho}$ is uniformly stable.
Using Proposition \ref{uniformlystable} we conclude that $h\in G_{\rho}$.
Therefore it follows that $g\in G_{\rho}$.
\end{proof}

\begin{proof}[Proof of Proposition \ref{mainprop2}]
Let $h\in K_{\rho}=G_{\rho}$.
Thanks to Lemma \ref{stabilisation}, we know that there are elements $f_1,f_2\in G_{\rho}$ such that $f_2$ is a commutator of elements in $G_{\rho}$ and the element $f=f_1^{-1} (h f_2^{-1}) f_1$
is uniformly stable.

{\bf Claim}: There is a subgroup $K<G_{\rho}$ such that $K\cong F'\oplus...\oplus F'$ and $f\in K$.

Note that the claim implies that $$hf_2^{-1}\in f_1Kf_1^{-1} \cong F'\oplus...\oplus F' <G_{\rho}$$
So the conclusion of Proposition \ref{mainprop2} for $h$ follows from this claim.

{\bf Proof of Claim.}
We know that $f\in G_\rho$ is a uniformly stable element.
Let $\{I_{\alpha}\}_{\alpha\in P}$ be the atoms of $f$.
Let $l_f$ be the constant from Definition \ref{lf}.
Let the cellular decomposition of $f$ as decorated atoms $\mathcal{T}_{l_f}(f)$ be $f_{\zeta_1},...,f_{\zeta_m}$.
Here we represent the equivalence classes of decorated atoms in $\mathcal{T}_{l_f}(f)$ as $\zeta_1,...,\zeta_m$.
For each $1\leq i\leq m$, seet $L_i=|I_{\alpha}|$ where $(I_{\alpha},l_f)\in \zeta_i$.
(Recall that $|I_{\alpha}|=|I_{\beta}|$ whenever $(I_{\alpha},l_f),(I_{\beta},l_f)\in \zeta_i$.)
For each $1\leq i\leq m$, define the canonical isomorphism $$\phi_i:F'\to F_{[0,L_i]}'$$
where $F_{[0,L_i]}$ is the standard copy of $F$ supported on the interval $[0,L_i]$.

For each $1\leq i\leq m$, we have $$\{\mathcal{W}(I_{\alpha},l_f)\mid (I_{\alpha},l_f)\in \zeta_i\}=\{W_i,W^{-1}_i\}$$
for words $W_1,...,W_m$.
Define a map $$\phi: \oplus_{1\leq i\leq m}F'\to \textup{Homeo}^+(\mathbf{R})$$ as follows. 
For $\alpha\in P$ and $1\leq i\leq m$: $$\phi(g_1,...,g_m)\restriction I_{\alpha}\cong_T \phi_i(g_i)\qquad \text{ if }(I_{\alpha},l_f)\in \zeta_i\text{ and }\mathcal{W}(I_{\alpha},l_f)=W_i$$
$$\phi(g_1,...,g_m)\restriction I_{\alpha}\cong_T \iota_{L_i}\circ\phi_i(g_i)\circ \iota_{L_i}\qquad \text{ if }(I_{\alpha},l_f)\in \zeta_i\text{ and }\mathcal{W}(I_{\alpha},l_f)=W^{-1}_i$$
where $\iota_{L_i}:[0,L_i]\to [0,L_i]$ is the unique orientation reversing isometry. 
It is easy to check that this is an injective group homomorphism.
Moreover, the image of each element under $\phi$ satisfies the conditions of Definition \ref{Krho}.
Therefore, the image of $\phi$ lies in $K_{\rho}=G_{\rho}$ and contains $f = \phi (\phi_1^{-1}(f_{\zeta_1}), \ldots, \phi_m^{-1}(f_{\zeta_m}))$.
\end{proof}

\begin{proof}[Proof of Theorem \ref{main}]
Let $f\in G_{\rho}$.
We know from Lemma \ref{stabilisation} that there is a commutator $f_1\in G_{\rho}$ and an $f_2\in G_{\rho}$ such that $f_0=f_2(ff_1^{-1})f_2^{-1}$ is uniformly stable.
By Proposition \ref{mainprop2}, we know that there is a subgroup of $G_{\rho}$ that contains $f_0$ and is isomorphic to a direct sum of copies of $F'$.
Since by Theorem \ref{clF} every element in $F'$ can be expressed as a product of at most two commutators of elements in $F'$, the same holds for a direct sum of copies of $F'$.
It follows that $f_0$ can be expressed as a product of at most two commutators of elements in $G_{\rho}$.
Therefore, $f$ can be expressed as a product of at most three commutators of elements in $G_{\rho}$.
\end{proof}

\begin{proof}[Proof of Corollary \ref{maincorollary}]
This follows from a theorem of Ghys \cite{ghys}, according to which such an action by orientation preserving homeomorphisms of the circle  
induces a homogeneous quasimorphism (the rotation number), which is nontrivial in case of absence of a global fixed point.   
Since by Theorem \ref{main} the stable commutator length of $G_{\rho}$ vanishes, this quasimorphism must be trivial. Therefore, every such action of $G_{\rho}$ 
on $\mathbf{S}^1$ must admit a global fixed point.
\end{proof}

\begin{proof}[Proof of Corollary \ref{maincor1}]
The group $G_{\rho}$ for a quasiperiodic labelling $\rho$ cannot admit a type $\textup{(i)}$ action since it is not locally indicable (recall that $G_{\rho}$ is a simple group).
For a type $\textup{(ii)}$ action of $G_{\rho}$ it is easy to construct an element $h\in \textup{Homeo}^+(\mathbf{R})$ such that $h$ commutes with each element of $G_{\rho}$.
Upon taking a quotient, this provides a faithful fixed point free action of $G_{\rho}$ on the circle which contradicts Corollary \ref{maincorollary}.
\end{proof}

%%%%%%%%%%%%%%%%%%%%%%%%%%%%%%%%%%%%%%%%%%%%%%%%%%%%%%%%%%%%%%%%%%%%%%%%%%%%%%%%%%%%%%%%%%%%%%%%%%%%%%%%%

\begin{small}

%%%%%%%%%%%%%%%%%%%%%%%%%%%%%%%%%%%%%%%%%%%%%%%%%%%%%%%%%%%%%%%%%%%%%%%%%%%%%%%%%%%%%%%%%

\vspace{0.3cm}

\textit{James Hyde} 

Mathematical Institute,

University of St. Andrews, Scotland.

jameshydemaths@gmail.com\\

\textit{Yash Lodha} 

Institute of Mathematics, EPFL, Lausanne, Switzerland

yash.lodha@epfl.ch\\

\textit{Andr\'{e}s Navas}

Dpto. de Matem\'aticas y C.C., Universidad de Santiago de Chile

Unidad Cuernavaca Instituto de Matem\'aticas, Univ. Nac. Aut\'onoma de M\'exico

andres.navas@usach.cl

\bigskip

\textit{Crist\'obal Rivas}

Dpto. de Matem\'aticas y C.C., Universidad de Santiago de Chile

cristobal.rivas@usach.cl

\end{small}

%%%%%%%%%%%%%%%%%%%%%%%%%%%%%%%%%%%%%%%%%%%%%%%%%%%%%%%%%%%%%%%%%%%%%%%%%%%%%%%%%%%%%%%%%%
%%%%%%%%%%%%%%%%%%%%%%%%%%%%%%%%%%%%%%%%%%%%%%%%%%%%%%%%%%%%%%%%%%%%%%%%%%%%%%%%%%%%%%%%%%

\end{document}